\newcommand{\bx}{\textbf{x}}
\newcommand{\bF}[1]{\textbf{#1}}
\newcommand{\norm}[2]{\lVert #1 \rVert_{#2}}
\newcommand{\normtwo}[1]{\lVert #1 \rVert_2}
\newcommand{\define}{\stackrel{\text{def}}{=}}
\newcommand{\at}[2]{\left.#1\right|_{#2}}
\newcommand{\Span}[1]{\text{span}\left\{#1\right\}}
\newcommand{\krylov}[2]{{\cal{K}}_{#2}(#1)}
\newtheorem{propos}{Proposition}
\title{A Flexible Krylov Solver for Shifted Systems with Application to Oscillatory Hydraulic Tomography}
\author{Arvind K. Saibaba \and Tania Bakhos \and Peter K. Kitanidis}
\begin{document}

\maketitle

\begin{abstract}
We discuss efficient solutions to systems of shifted linear systems arising in computations for oscillatory hydraulic tomography (OHT). The reconstruction of hydrogeological parameters such as hydraulic conductivity and specific storage using limited discrete measurements of pressure (head) obtained from sequential oscillatory pumping tests, leads to a nonlinear inverse problem. We tackle this using the quasi-linear geostatistical approach~\cite{kitanidis1995quasi}. This method requires repeated solution of the forward (and adjoint) problem for multiple frequencies, for which we use flexible preconditioned Krylov subspace solvers specifically designed for shifted systems based on ideas in~\cite{gu2007flexible}. The solvers allow the preconditioner to change at each iteration. We analyze the convergence of the solver and perform an error analysis when an iterative solver is used for inverting the preconditioner matrices. Finally, we apply our algorithm to a challenging application taken from oscillatory 
hydraulic tomography to demonstrate the computational gains by using the resulting method.

\end{abstract}

\section{Introduction}

Hydraulic tomography (HT) is a method for characterizing the subsurface that consists of applying pumping in wells while aquifer pressure (head) responses are measured. Using the data collected at various locations, important aquifer parameters (e.g., hydraulic conductivity and specific storage) are estimated. An example of such a technique is transient hydraulic tomography (reviewed in~\cite{cardiff20113D}). 
Oscillatory hydraulic tomography (OHT) is an emerging technology for aquifer characterization that involves a tomographic analysis of oscillatory signals. Here we consider that a sinusoidal signal of known frequency is imposed at an injection point and the resulting change in pressure is measured at receiver wells. Consequently, these measurements are processed using a 
nonlinear inversion algorithm to recover estimates for the desired aquifer parameters. Oscillatory hydraulic tomography has notable advantages over transient hydraulic tomography; namely, a weak signal can be distinguished from the ambient noise and by using signals of different frequencies, we are able to extract additional information without having to drill additional wells.

Using multiple frequencies for OHT has the potential to improve the quality of the image. However, it involves considerable computational burden. Solving the inverse problem, i.e. reconstructing the hydraulic conductivity field from pressure measurements, requires several application of the forward (and adjoint) problem for multiple frequencies. As we shall show in section~\ref{sec:application}, solving the forward (and adjoint) problem involves the solution of shifted systems for multiple frequencies. For finely discretized grids, the cost of solving the system of equations corresponding to each frequency can be high to the extent that it might prove to be computationally prohibitive when many frequencies are used, for example, on the order of $200$. The objective is to develop an approach in which the cost of solving the forward (and adjoint) problem for multiple frequencies is not significantly higher than the cost of solving the system of equations for a single frequency - in other words, the cost should 
depend only weakly on the number of frequencies.

Direct methods, such as sparse LU, Cholesky or LDL$^T$ factorization, are suited to linear systems in which the matrix bandwidth is small, so that the fill-in is somewhat limited. An additional difficulty that direct methods pose is that for solving a sequence of shifted systems, the matrix has to be re-factorized for each frequency, resulting in a considerable computational cost. By contrast, Krylov subspace methods for shifted systems are particularly appealing since they exploit the shift-invariant property of Krylov subspaces~\cite{simoncini2007recent} to obtain approximate solutions for all frequencies by generating a single approximation space that is shift independent. Several algorithms have been developed for dealing with shifted systems. Some are based on Lanczos recurrences for symmetric systems~\cite{meerbergen2003solution,meerbergen2010lanczos}; others use the unsymmetric Lanczos~\cite{freund1993solution},  and some others use Arnoldi iteration~\cite{datta1991arnoldi,frommer1998restarted,
simoncini2003restarted,darnell2008deflated,gu2007flexible}. Shifted systems also occur in several other applications such as control theory, time dependent partial differential equations, structural dynamics, and quantum chromodynamics (see~\cite{simoncini2003restarted} and references therein). Hence, several other communities can benefit from advances in efficient solvers for shifted systems. The Krylov subspace method that we propose is closest in spirit to~\cite{gu2007flexible}. However, as we shall demonstrate, we have extended their solver significantly.

\textbf{Contributions}: Our major contributions can be summarized as follows:
\begin{itemize}
\item We have extended the flexible Arnoldi algorithm discussed in~\cite{gu2007flexible} for shifted systems of the form $(A + \sigma_jI)x_j =b$ to systems of the form $(K + \sigma_j M) x_j = b $ for $j = 1,\dots,n_f$ that employs multiple preconditioners of the form $(K+\tau M)$. In addition, we provide some analysis for  the convergence of the solver.

\item When an iterative solver is used to apply the preconditioner, we derive an error analysis that gives us stopping tolerances for monitoring convergence without constructing the full residual.

\item Our motivation for the need for fast solvers for shifted systems comes from oscillatory hydraulic tomography. We describe the key steps involved in inversion for oscillatory hydraulic tomography, and discuss how the computation of the Jacobian can be accelerated by the use of the aforementioned fast solvers.
\end{itemize}

\textbf{Limitations}: The focus of this work has been on the computational aspects of oscillatory hydraulic tomography. Although the initial results are promising, several issues remain to be resolved for application to realistic problems of oscillatory hydraulic tomography. For example, we are inverting for the hydraulic conductivity assuming that the storage field is known. In practice, the storage is also unknown and needs to be estimated from the data as well. Moreover, simulating realistic conditions (higher variance in the log conductivity field, and adding measurement noise in a realistic manner) may significantly improve the performance with the addition of information from different frequencies. We will deal with these issues in another paper.

The paper is organized as follows. In section~\ref{sec:krylov}, we discuss the Krylov subspace methods for solving shifted linear systems of equations based on the Arnoldi iteration using preconditioners that are also shifted systems. In section~\ref{sec:geneigen}, we discuss the convergence of the iterative solver and its connection to the convergence of the eigenvalues of the generalized eigenvalue problem $Kx=\lambda Mx$. In section~\ref{sec:inexact}, we discuss an error analysis when an iterative method is used to invert the preconditioner matrices. In section~\ref{sec:application}, we discuss the basic constitutive equations in OHT, which can be expressed as shifted linear system of equations and discuss the geostatistical method for solving inverse problems. Finally, in section~\ref{sec:numerical} we present some numerical results on systems of shifted systems and then discuss numerical results involving the inverse problem arising from OHT. We observe significant speed-ups using our Krylov subspace solver.

\section{Krylov subspace methods for shifted systems}\label{sec:krylov}
The goal is to solve systems of equations of the form 
\begin{equation}
 \label{eqn:multipleshifted}
  \left( K + \sigma_j M \right) x_j  = b \qquad j=1,\dots,n_f 
\end{equation}

Note that $\sigma_j$, for $j=1,\dots,n_f$ are (in general) complex shifts. We assume that none of these systems are singular. In particular, for our application, both $K$ and $M$ are stiffness and mass matrices respectively and are positive definite, but our algorithm only requires that they are invertible. By using a finite volume or lumped mass approach~\cite{hughes2012finite}, the mass matrices become diagonal but this assumption is not necessary. Later, in sections~\ref{sec:forward} and~\ref{sec:sensitivity}, we will show how such equations arise in our applications.

 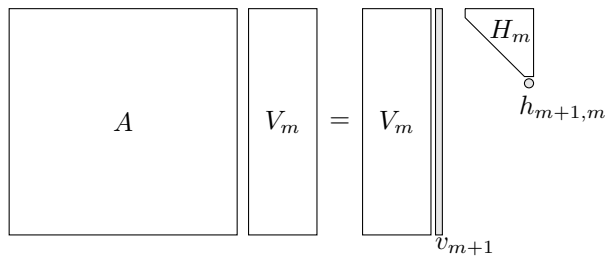
\begin{figure}[!ht]
  \centering
  \begin{tikzpicture}[scale = 0.3]
   \draw (0,0) rectangle ( 10, 10);
   \node (A) at (5,5) {$A$};
   \draw (10.5, 0) rectangle (13.5, 10);
   \node (Vm) at (12, 5) {$V_m$};
   \node (eq) at (14.5, 5) {$=$};
   \draw (15.5,0) rectangle (18.5, 10); 
   \node (Vm1) at (17,5) {$V_m$};
   \draw[fill=gray!20] (18.7,0) rectangle (19,10);
   \node (vm1) at (20,-0.5) {$v_{m+1}$};
   
   \draw (20,10) -- (23, 10) -- (23,7) -- (22.6, 7) -- (20,9.6) -- (20, 10) ;
   \node (Hm) at (22,9) {$H_m$}; 
  
   \draw[fill=gray!20] (22.8,6.7) circle [radius = 0.2];
   \node (hm1m) at (24.3,5.6) {$h_{m+1,m}$};
   
  \end{tikzpicture}
  \caption{Representation of the Arnoldi algorithm after $m$ steps.}
\label{fig:arnoldi}
 \end{figure}

As a brief introduction, we review the Krylov based iterative solvers for the system of equations $Ax = b$. In particular, we describe the variants generated by Arnoldi iteration, such as Full Orthogonalization Method (FOM) and Generalized Minimum RESidual method (GMRES). Krylov solvers typically generate a sequence of orthonormal vectors $v_1,\dots,v_m$ that are orthonormal. These vectors form a basis for the Krylov subspace: 
\[ \krylov{A,b}{m} \define \Span{b,Ab,\dots,A^{m-1}b}\] 
At the end of the $m$th iteration, a typical relation is obtained of the form (see figure~\ref{fig:arnoldi}),
\[ AV_m = V_{m+1}\bar{H}_m\]
where, $V_m = [v_1,\dots,v_m]$ and $v_1 = b/\beta$, $\beta = \normtwo{b}$ with $x_0= 0$ and $\bar{H}_m$ is an upper Hessenberg matrix. Then, an approximate solution to the system $Ax = b$ by searching for a solution of the form $x_m =  V_my_m$, where $y_m$ is chosen such that it minimizes the residual $r_m \define b - Ax_m$ which, leads to GMRES subproblem,
\begin{equation}\label{eqn:gmressubproblem} 
\min_{y_m \in \mathbb{C}^m}\normtwo{r_m}   \Rightarrow \min_{y_m \in \mathbb{C}^m} \normtwo{\beta e_1 - \bar{H}_my_m} 
\end{equation} 
  or an oblique projection $r_m \perp \Span{V_m}$ which leads to the FOM subproblem
\begin{equation}\label{eqn:fomsubproblem} 
r_m \perp \Span{V_m}  \qquad \Rightarrow \qquad H_my_m = \beta e_1 \end{equation}
As $m$ increases, the cost per iteration increases at least as ${\cal{O}}(m^2n)$ and the memory costs increase as ${\cal{O}}(mn)$\cite{saad2003iterative}. The standard remedies to reducing the number of iterations are 1) using an appropriate preconditioner, 2) truncating the orthogonalization in the Arnoldi algorithm and 3) restarting the Arnoldi algorithm periodically.

An interesting property of the Krylov subspaces is that they are shift-invariant. In other words, $\krylov{A,b}{m} =\krylov{A + \sigma I,b}{m}$. Therefore, the same Krylov basis generated for the system $Ax=b$ can be effectively used to solve shifted systems of the form $(A+\sigma I)x = b$. The strategy for solving the shifted systems is therefore, to first generate a basis that is applicable to all systems, and then use the shift-invariant property (for a detailed review, see \cite[section 14.1]{simoncini2007recent} and references therein) to solve a smaller subproblem of the form~\eqref{eqn:fomsubproblem} or~\eqref{eqn:gmressubproblem}. The same idea can be extended to systems of the form~\eqref{eqn:multipleshifted} using a preconditioner of the form $(K+\tau M)$~\cite{meerbergen2003solution,gu2007flexible}, which solves for multiple shifted systems roughly at the cost of a single system. However, in practice, the number of iterations taken can often be large, especially for large matrices arising from 
realistic 
applications.

In order to minimize the number of iterations, Meerbergen~\cite{meerbergen2003solution} proposes a left preconditioner of the form $K_\tau \define K + \tau M $ that is factorized and inverted using a direct solver. The application of $K_\tau^{-1}$ to a vector is, in general, not cheap but the spectrum of $(K+\tau M)^{-1}(K + \sigma M)$ is often more favorable, which results in fast convergence of the Krylov methods in just a few iterations~\cite{meerbergen2003solution}. This form of preconditioning has its roots in solving large-scale generalized eigenvalue problems and is known as {\it Cayley transformation}~\cite{golub1996matrix}. In~\cite{popolizio2008acceleration}, the authors provide some analysis for choosing the best value of $\tau$ that optimally preconditions all the systems. However, we observed that (also, see~\cite{gu2007flexible}) using a single preconditioner for all the systems may not yield optimal convergence for all systems. In~\cite{gu2007flexible}, the authors propose a flexible Arnoldi 
method for shifted systems that uses different values of $\tau$ resulting in different preconditioners at each iteration. This can potentially reduce the number of iterations for  all the shifted systems. Before we describe our flexible algorithm in section~\ref{sec:flexibleprecond}, we will derive the right preconditioned version of Krylov subspace method for shifted systems. This serves two purposes - it motivates our algorithm, while clarifying some of the notation.

\subsection{Right preconditioning for shifted systems}\label{sec:rightprecond}
As mentioned earlier, we will review the right preconditioned version of the Krylov subspace algorithm for shifted systems. Following the approach in~\cite{meerbergen2003solution,simoncini2007recent}, we solve the system of equations~\eqref{eqn:multipleshifted} using a shifted right preconditioner of the form $K_\tau \define K+\tau M $
\begin{equation}
 \label{eqn:rightpreconditioned}
  (K + \sigma_j M) K_\tau^{-1} \bar{x}(\sigma_j) = b \qquad x(\sigma_j) =  K_\tau^{-1}\bar{x}(\sigma_j)
\end{equation}
for $j=1,\dots,n_f$. We have the following identity that 
\begin{equation}\label{eqn:shiftinvariant}
 (K+\sigma M) (K + \tau M)^{-1} = I + (\sigma - \tau) M (K+\tau M)^{-1}
\end{equation}

Using the identity in equation~\eqref{eqn:shiftinvariant}, we have the following shift-invariance property $\krylov{MK_\tau^{-1},b}{m} = \krylov{(K+\sigma M)(K+\tau M)^{-1},b}{m}$. Note that $\krylov{MK_\tau^{-1},b}{m} $ is independent of $\sigma$. This shift-invariance property suggests an efficient algorithm for solving the system of equations~\eqref{eqn:rightpreconditioned}. There is a distinct advantage in using iterative solvers for shifted systems; the expensive step of constructing the basis for the Krylov subspace is performed only once and using the shift-invariance property of the Krylov subspace, the sub-problem for each shift in algorithm~\ref{alg:shiftedsolve} can be computed at a relatively low cost. 

\begin{algorithm}[!ht]
 \begin{algorithmic}[1]
  \REQUIRE $M$ and a right hand side $b$.
  \STATE Compute $v_1 = b /\beta$ and $\beta \define \normtwo{b}$
  \STATE Choose $\tau$ and factorize $K_\tau \define K + \tau M$
  \STATE Define the $(m+1)\times m$ matrix  $\bar{H}_m = \{ h_{i,k}\}_{1\leq i \leq m+1, 1 \leq k \leq  m}$. Set $\bar{H}_m = 0$.
  \FORALL {$k=1,\dots,m$}
  \STATE Compute ${z}_k = K_\tau^{-1} v_k$ 
  \STATE $w_k := Mz_k$
  \FORALL {$i=1,\dots,k$} 
  \STATE $h_{ik} := w_k^*v_i$
  \STATE Compute $w_k := w_k - h_{i,k}v_k$
  \ENDFOR 
  \STATE $h_{k+1,k}:= \lVert w_k \rVert_2 $. If $h_{k+1,k} = 0$ stop 
  \STATE $v_{k+1} = w_k/ h_{k+1,k}$
   \ENDFOR
 \end{algorithmic}
\caption{Arnoldi using Modified Gram-Schmidt~\cite{saad2003iterative}:}
\label{alg:arnoldi}
\end{algorithm}

The algorithm proceeds as follows: first, we run $m$ steps of the Arnoldi algorithm on the matrix $M K_\tau^{-1}$ with the starting vector $b$ to get a basis for the Krylov subspace $\krylov{MK_\tau^{-1},b}{m}$. This is summarized in algorithm~\ref{alg:arnoldi}. At the end of $m$ steps of the Arnoldi process, we construct two sets of vectors $V_{m+1} = [v_1,\dots,v_{m+1}]$ and $Z_m = [z_1,\dots,z_m ]$, and an upper Hessenberg matrix $\bar{H}_m$ that satisfy the following relations,

\begin{align}
\label{eqn:arnoldi}
 M  Z_m  = & \quad V_{m+1}\bar{H}_m \\
   (K + \tau M) Z_m =  & \quad V_m  \label{eqn:zm}
\end{align}
where, $ V_m^* V_m = I $. Multiplying the first equation by $(\sigma_j-\tau)$ and adding it to the second equation gives us 

\begin{equation}
 \label{eqn:shiftedarnoldi}
 (K + \sigma_j M)Z_m = V_{m+1} \underbrace{\left(\begin{bmatrix} I \\ {0}\end{bmatrix} + (\sigma_j-\tau) \bar{H}_m  \right)}_{\define \bar{H}_m(\sigma_j; \tau)} = V_{m+1}\bar{H}_m(\sigma_j;\tau)
 \end{equation}
In algorithm~\ref{alg:arnoldi}, $V_m$ forms a basis for the  Krylov subspace ${\cal{K}}_m(MK_\tau^{-1},b)$.
However, we seek solutions of the form $x_m =  Z_my_m$ (with zero as the initial guess). Now $x_m \in \Span{Z_m}$ where $Z_m$ is the  space spanned by the vectors $z_k = K_\tau^{-1}v_k$ for $k=1,\dots,m$. By minimizing the residual norm over all possible vectors in $\Span{Z_m}$, we obtain the generalized minimum residual (GMRES) method for shifted systems, whereas by imposing the Petrov-Galerkin condition $r_m \perp\Span{V_m}$, we obtain the full orthogonalized method (FOM) for shifted systems. This is summarized in algorithm~\ref{alg:shiftedsolve}.  It should be noted that the way we have described this algorithm, we need to store the vectors $Z_m$. In practice, this is not necessary. We chose to present it this way in order to have consistent notation with the flexible algorithm we will describe in subsection~\ref{sec:flexibleprecond}.

\begin{algorithm}[!ht]
 \begin{algorithmic}[1]
 \REQUIRE matrices $K$ and $M$, a right hand side $b$, $\sigma \in \{ \sigma_1,\dots,\sigma_{n_f}\}$
 \STATE Choose $\tau$, build $K_\tau \define K + \tau M$ and construct preconditioner. Set $m = 1$.
 \WHILE { all systems have not converged}
   \STATE Generate $V_{m+1},\bar{H}_m$ and $Z_m$ using algorithm~\ref{alg:arnoldi}.
  \FORALL {$j = 1,\dots,n_f$}
   \IF {system $j$ not converged} 
   \STATE Construct $\bar{H}_m(\sigma_j;\tau) \define I + (\sigma_j -\tau)\bar{H}_m$ (see equation~\eqref{eqn:shiftedarnoldi}).
   \STATE FOM: 
    \[  H_m(\sigma_j; \tau)y_m^{fom}(\sigma_j) =  \beta e_1\]
   \STATE GMRES: 
    \[ y_m^{gmres}(\sigma_j) \define \min_{ y_m \in \mathbb{C}^m} \lVert \beta e_1 - \bar{H}_m(\sigma_j;\tau) y_m \rVert_2\] 
    \STATE Construct the approximate solution  $x_m(\sigma_j) = Z_my_m(\sigma_j)$
	\ENDIF	
	\ENDFOR
	\STATE $m\leftarrow m + 1$. 
  \ENDWHILE
 
 \end{algorithmic}
\caption{FOM/GMRES for Shifted Systems}
 \label{alg:shiftedsolve}
\end{algorithm}

In~\cite{meerbergen2003solution}, the spectrum of $(K+\sigma M)(K+\tau M)^{-1}$ was analyzed and it was shown that the preconditioner $ K_\tau$ is well suited only for values of frequencies $\sigma$ near $\tau$. However, the values of $\sigma$ can be widely spread and a single preconditioner $K_\tau$ might not be a good choice for preconditioning all the systems. In section~\ref{sec:solverexperiments}, we demonstrate an example in which a single preconditioner does not satisfactorily precondition all the systems. In~\cite{gu2007flexible}, the authors propose a flexible approach using a (possibly) different preconditioner at each iteration. We shall adopt this 
approach.

\subsection{Flexible preconditioning}\label{sec:flexibleprecond}

We now describe our flexible Krylov approach for solving shifted systems based on~\cite{gu2007flexible} which we have extended to the case $M\neq I$. Following~\cite{saad1993flexible} and~\cite{gu2007flexible}, we use a variant of GMRES which allows a change in the preconditioner at each iteration. In algorithm~\ref{alg:arnoldi}, we considered a fixed preconditioner of the form $K_\tau \define K + \tau M$ for a fixed $\tau$. Suppose we used a different preconditioner at each iteration of the form $K+\tau_k M$ for $k=1,\dots,m$, then instead of~\eqref{eqn:zm} we have, 
\begin{equation}
   (K+\tau_k M) z_k =  v_k  \qquad k=1,\dots,m \label{eqn:zmmod}
\end{equation}
The algorithm is summarized in algorithm~\ref{alg:shiftedsolvemod}. In this algorithm, in addition to saving $V_m$, we also save the matrix $Z_m$. If at every step in the flexible Arnoldi algorithm we use the same value of $\tau$, we are in the same position as in algorithm~\ref{alg:arnoldi}. We have $Z_m = [z_1,\dots,z_m]$, $\bar{H}_m = \{h_{ik}\}_{1\leq i \leq m+1,1\leq k \leq m}$ and $V_{m} = [v_1,\dots,v_{m}]$ which satisfies $V_m^*V_m = I_m$. In addition, we also have the following relations 

\begin{align} 
 MZ_m = & \quad V_{m+1}\bar{H}_m \label{eqn:arnoldimod} \\
 KZ_m + MZ_m T_m = & \quad V_m   \label{eqn:zmvmrelation}
\end{align}
where, $T_m = \text{diag}\{ \tau_1,\dots,\tau_m\}$. Multiplying~\eqref{eqn:arnoldimod} by $\sigma_jI_m - T_m$ and adding~\eqref{eqn:zmvmrelation}, we obtain for $j = 1,\dots,n_f$  
\begin{equation}
(K + \sigma_jM )Z_m = V_{m+1} \underbrace{\left( \begin{bmatrix} I \\ 0 \end{bmatrix}  + \bar{H}_m (\sigma_jI_m - T_m) \right)}_{\define \bar{H}(\sigma_j; T_m)} = V_{m+1}\bar{H}_m(\sigma_j;T_m) \label{eqn:shiftedarnoldimod} 
\end{equation}

\begin{algorithm}[!ht]
 \begin{algorithmic}[1]
  \REQUIRE $M$ and $b$ the right hand side, $\tau_k, k=1,\dots,m$, $v_1 = b /\beta$ and $\beta \define \normtwo{b}$
  \STATE Define the $(m+1)\times m$ matrix  $\bar{H}_m = \{h_{i,k} \}_{1\leq i \leq m+1, 1 \leq k \leq  m}$. 
  \FORALL {$k=1,\dots,m$}
  \STATE Solve $ (K+\tau_kM) {z}_k =  v_k$
  \STATE $w_k := Mz_k$
   \FORALL {$i=1,\dots,k$} 
    \STATE $h_{i,k} := w_k^*v_i$
    \STATE Compute $w_k := w_k - h_{i,k}v_k$
  \ENDFOR 
  \STATE $h_{k+1,k}:= \lVert w_k \rVert_2 $. If $h_{k+1,k} = 0$ stop 
  \STATE $v_{k+1} = w_k/ h_{k+1,k}$
   \ENDFOR
    \end{algorithmic}
\caption{Flexible Arnoldi using Modified Gram-Schmidt }
\label{alg:arnoldimod}
\end{algorithm}

We are now in a position to derive a FOM/GMRES algorithm for shifted systems with flexible preconditioning. We search for solutions which are approximations of the form $x_m(\sigma_j) = Z_m y_m(\sigma_j)$, which spans the columns of $Z_m$. Strictly speaking, $\text{span}\{Z_m\}$ is no longer a Krylov subspace. By minimizing the residual norm over all possible vectors in $\Span{Z_m}$, we obtain the flexible generalized minimum residual (FGMRES) method for shifted systems, whereas by imposing the Petrov-Galerkin condition $r_m \perp\Span{V_m}$, we obtain the flexible full orthogonalized method (FFOM) for shifted systems. This is summarized in algorithm~\ref{alg:shiftedsolvemod}. The residuals can be computed as 
\begin{align}\label{eqn:fomgmresresidual}
r_m(\sigma_j) = & \quad b - (K+\sigma_jM)x_m(\sigma_j) \\ \nonumber
              = & \quad V_{m+1}\left(\beta e_1 - \bar{H}_m(\sigma_j;T_m)y_m(\sigma_j)\right) \\ \nonumber
\end{align}

\begin{algorithm}[!ht]
 \begin{algorithmic}[1]
 \REQUIRE matrices $K$ and $M$, vector $b$, $\sigma \in \{ \sigma_1,\dots,\sigma_{n_f}\}$, set $m=1$.

 \WHILE {all systems have not converged}
\STATE  Choose $T_m = \text{diag}\{\tau_1,\dots,\tau_m\}$.
  \STATE Generate $V_{m+1},\bar{H}_m$ and $Z_m$ using algorithm~\ref{alg:arnoldimod}.
  \FORALL {$j= 1,\dots,n_f$}
   \IF {system $j$ has not converged} 
   \STATE Construct $\bar{H}_m(\sigma_j;T_m)\define I + \bar{H}_m(\sigma_jI-T_m)$ (see equation~\eqref{eqn:shiftedarnoldimod}).
   \STATE FOM: 
    \[  H_m(\sigma_j; T_m)y_m^{fom}(\sigma_j) =  \beta e_1\]
   \STATE GMRES: 
    \[ y_m^{gmres}(\sigma_j) \define \min_{ y_m \in \mathbb{C}^m} \lVert \beta e_1 - \bar{H}_m(\sigma_j;T_m) y_m \rVert_2\] 
    \STATE Construct the approximate solution as $x_m(\sigma_j) = Z_m y_m(\sigma_j)$
  \ENDIF
 \ENDFOR
 \STATE $m\leftarrow m+1$
\ENDWHILE 
 \end{algorithmic}
\caption{Flexible FOM/GMRES for Shifted Systems}
 \label{alg:shiftedsolvemod}
\end{algorithm}

\subsubsection{Selecting values of $\tau_k$} 
In algorithm~\ref{alg:arnoldimod}, at each iteration we solve a system of the form $(K+\tau_k M)z_k = v_k$ for $k=1,\dots,m$. This cost can be high if the dimension of the Arnoldi subspace $m$ is large and a different preconditioner $K+\tau_kM$ is used at every iteration. In practice, it is not necessary to form and factorize $m$ systems corresponding to different $\tau_k$. In applications, we only need choose a few different $\tau_k$ that cover the entire range of the parameters $\sigma_j$. This was also described in~\cite{gu2007flexible}. The system of equations~\eqref{eqn:zmmod} is solved using a direct solver and since only a few values of $\tau_k$ are chosen, the systems can be formed and factorized. Thus, the computational cost will not be affected greatly even if the number of frequencies $n_f$ is large.

Let $\bar{\tau} = \{\bar{\tau}_1,\dots,\bar{\tau}_{n_p} \}$ be the set of values that $\tau_k$ can take. In other words, we take $n_p$ distinct preconditioners. Then, the first $m_1$ values of $\tau_k$ are assigned $\bar{\tau}_1$, the next $m_2$ values of $\tau_k$ are assigned $\bar{\tau}_2$ and so on. We also have $m = \sum_{k=1}^{n_p}m_k$. 

\subsubsection{Restarting}\label{sec:restarting}

As the dimension of the subspace $m$ increases, the computational and memory costs increase significantly. A well known solution to this problem is restarting. The old basis is discarded and the Arnoldi algorithm is restarted on a new residual. However, for shifted systems, in order to preserve the shift-invariant property, one needs to ensure collinearity of the residuals of the shifted systems. For FOM the residuals are naturally collinear and the Arnoldi algorithm can be restarted by scaling each residual by some scalar that depends on the shift~\cite{simoncini2003restarted,gu2007flexible}. For GMRES, the approach used by~\cite{frommer1998restarted} was extended to shifted systems with multiple preconditioners by~\cite{gu2007flexible}. We did not explore this issue further, and the reader is referred to~\cite{gu2007flexible} for further details.

\section{Generalized eigenvalue problem and error estimates}\label{sec:geneigen}
We start by computing the approximate eigenvalues and eigenvectors for the matrix $(K+\sigma M)M^{-1}$. Using estimates for approximate eigenvalues and eigenvectors we derive expressions for the convergence of the flexible algorithms. The approximate eigenvalues are called Ritz values. For convenience, we drop the subscript on the shifted frequency, i.e., use $\sigma$ instead of $\sigma_j$ where, $j=1,\dots,n_f$.  
\begin{propos}\label{prop:ritz}
Let $Z_m$, $\bar{H}_m$ and $V_{m+1}$ be computed according to algorithm~\ref{alg:arnoldimod}. Calculate the eigenpairs of the generalized eigenvalue problem
\begin{equation}\label{eqn:genritz}
H_m(\sigma;T_m) f  = \theta  H_m f  
\end{equation}
Then, the Ritz pair $\left( \theta, u \define V_{m+1}\bar{H}_mf \right)$ satisfy the Petrov-Galerkin condition~\cite[section 4.3.3]{saad1992numerical}
\begin{equation}\label{eqn:petrovgalerkin}
(K + \sigma M)M^{-1}u - \theta u \perp \Span{V_m} \qquad u \in \Span{V_{m+1}\bar{H}_m}
\end{equation}
\end{propos}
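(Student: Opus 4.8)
The plan is to reduce the orthogonality condition \eqref{eqn:petrovgalerkin} to a small algebraic identity among the projected Hessenberg matrices, exploiting the two flexible Arnoldi relations \eqref{eqn:arnoldimod} and \eqref{eqn:zmvmrelation}, the derived shifted relation \eqref{eqn:shiftedarnoldimod}, and the orthonormality $V_m^* V_m = I_m$.

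First I would rewrite the candidate Ritz vector in terms of $Z_m$. Since $u \define V_{m+1}\bar{H}_m f$ and, by \eqref{eqn:arnoldimod}, $MZ_m = V_{m+1}\bar{H}_m$, we have $u = MZ_m f$ and hence $M^{-1}u = Z_m f$. This is the crucial observation: applying $M^{-1}$ to $u$ lands us back in $\Span{Z_m}$, so the otherwise awkward term $(K+\sigma M)M^{-1}u$ can be evaluated directly with the shifted Arnoldi relation. Indeed, \eqref{eqn:shiftedarnoldimod} gives
\[
(K+\sigma M)M^{-1}u = (K+\sigma M)Z_m f = V_{m+1}\bar{H}_m(\sigma;T_m)f.
\]
Subtracting $\theta u = \theta V_{m+1}\bar{H}_m f$ then yields the compact residual expression
\[
(K+\sigma M)M^{-1}u - \theta u = V_{m+1}\bigl(\bar{H}_m(\sigma;T_m) - \theta \bar{H}_m\bigr)f,
\]
written entirely in terms of the $(m+1)\times m$ Hessenberg data.

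Next I would impose the Petrov-Galerkin test by left-multiplying with $V_m^*$. The key point is the partitioning $V_{m+1} = [\,V_m\ \ v_{m+1}\,]$: since the Arnoldi vectors are orthonormal, $V_m^* V_m = I_m$ and $V_m^* v_{m+1} = 0$, so $V_m^* V_{m+1} = [\,I_m\ \ 0\,]$. Multiplying an $(m+1)\times m$ Hessenberg matrix on the left by $[\,I_m\ \ 0\,]$ simply extracts its top $m\times m$ block, turning $\bar{H}_m(\sigma;T_m)$ into $H_m(\sigma;T_m)$ and $\bar{H}_m$ into $H_m$. Hence
\[
V_m^*\bigl[(K+\sigma M)M^{-1}u - \theta u\bigr] = \bigl(H_m(\sigma;T_m) - \theta H_m\bigr)f,
\]
which vanishes precisely because $f$ solves the generalized eigenproblem \eqref{eqn:genritz}. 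Since the columns of $V_m$ span the test space, this establishes \eqref{eqn:petrovgalerkin}, and the membership $u \in \Span{V_{m+1}\bar{H}_m}$ is immediate from the definition of $u$.

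I do not anticipate a serious obstacle; the argument is essentially a bookkeeping exercise. The one place demanding care is the bar/no-bar distinction between the $(m+1)\times m$ rectangular Hessenberg matrices and their square top blocks, together with the reduction $V_m^* V_{m+1} = [\,I_m\ \ 0\,]$ that discards the $v_{m+1}$ contribution. A secondary point worth verifying is that $M$ is invertible (guaranteed by the standing assumption in section~\ref{sec:krylov}), so that the step $M^{-1}u = Z_m f$ is legitimate; this invertibility is exactly what makes $(K+\sigma M)M^{-1}$ the natural operator whose Ritz pairs are being approximated.
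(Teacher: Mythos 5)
Your proof is correct and takes essentially the same route as the paper's: your identity $(K+\sigma M)M^{-1}u = V_{m+1}\bar{H}_m(\sigma;T_m)f$, obtained via $M^{-1}u = Z_m f$ from~\eqref{eqn:arnoldimod} and~\eqref{eqn:shiftedarnoldimod}, is exactly the paper's relation~\eqref{eqn:rationalarnoldi} (derived there by eliminating $Z_m$ from~\eqref{eqn:arnoldimod} and~\eqref{eqn:zmvmrelation}), and both arguments then note that the $V_m$-component of the eigenresidual vanishes by~\eqref{eqn:genritz}. The only cosmetic difference is that the paper explicitly evaluates the surviving component as $-h_{m+1,m}v_{m+1}(\tau_m+\theta_k-\sigma)e_m^*f_k$, a formula it reuses in proposition~\ref{prop:fom}, whereas you discard it by projecting with $V_m^*$.
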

\begin{proof}
We first begin by manipulating equations~\eqref{eqn:arnoldimod} and~\eqref{eqn:zmvmrelation}. Eliminating $Z_m$ from those equations and adding $\sigma V_{m+1}\bar{H}_m$ to both sides, we have 
\begin{equation}\label{eqn:rationalarnoldi}
V_{m+1}\bar{H}_m(\sigma;T_m) =  (K+\sigma M)M^{-1}V_{m+1}\bar{H}_m
\end{equation}
Now, consider the residual of the eigenvalue calculation for the $k$th eigenpair, where $k=1,\dots,m$ is

\begin{align}\label{eqn:residualritzeigen}
 r^\text{eig}_k(\sigma) = &\quad  (K+\sigma M)M^{-1}V_{m+1}\bar{H}_m f_k - \theta_k V_{m+1}\bar{H}_mf_k  \\ \nonumber
           = & \quad  V_{m+1}\bar{H}_m(\sigma;T_m)f_k - \theta_k V_{m+1}\bar{H}_mf_k  \\ \nonumber
	   = &\quad  V_m\left(H_m(\sigma;T_m)f_k - \theta_kH_mf_k \right) - h_{m+1,m}v_{m+1}(\tau_m +\theta_k - \sigma)e_m^*f_k 	 \\ \nonumber
	   = & \quad - h_{m+1,m}v_{m+1}(\tau_m +\theta_k - \sigma)e_m^*f_k 	 \\ \nonumber
\end{align}
From which we can claim that $u \in \Span{V_{m+1}\bar{H}_m}$ and $(K+\sigma M)M^{-1}u - \theta u \perp \Span{V_m}$. In other words, they satisfy the Petrov-Galerkin~\eqref{eqn:petrovgalerkin} and are an approximate eigenpair of $(K+\sigma M)M^{-1}$.
\end{proof}

Furthermore, we define $\rho_k \define \normtwo{(K+\sigma M)M^{-1}u_k-\theta_k u_k}$, which is the residual norm of the $k$th eigenvalue calculations. When the residual of the eigenvalue calculations $\rho_k$ is small, say machine precision,  the Ritz values are a good approximation to the eigenvalues. It is readily verified that the eigenvalues $\lambda$ of $KM^{-1}$ (and the generalized eigenvalue problem $Kx=\lambda Mx$) are related to the eigenvalues $\lambda(\sigma)$ of $(K+\sigma M)M^{-1}$ by the relation $\lambda(\sigma) = \lambda + \sigma$. The importance of the convergence of Ritz values to the convergence of the Krylov subspace solver using FOM can be established by the following result. 

\begin{propos}\label{prop:fom}
Assume the requirements of proposition~\ref{prop:ritz}. Further, assume $F$ (matrix of generalized eigenvectors, see~\eqref{eqn:genritz}) is invertible so that the generalized eigendecomposition $H_m(\sigma;T_m) = H_mF\Theta F^{-1}$ exists. The residual using FFOM satisfies the following inequality
\begin{equation}\label{eqn:fomresidualineq} \normtwo{r_m(\sigma)} \leq \sum_{k=1}^m \rho_k\left|\frac{\sigma -\tau_m}{\theta_k +\tau_m-\sigma}\right| |\theta_k^{-1}| |s_k| \end{equation}
where, $s_k \define e_k^*F^{-1}(H_m^{-1}\beta e_1)$ and $\rho_k$ is the residual norm of the eigenvalue calculation, defined above. 
\end{propos}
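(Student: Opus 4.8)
The plan is to reduce the FFOM residual to a single scalar multiple of $v_{m+1}$ and then expand that scalar in the generalized eigenbasis supplied by Proposition~\ref{prop:ritz}. Starting from the residual expression~\eqref{eqn:fomgmresresidual}, $r_m(\sigma) = V_{m+1}(\beta e_1 - \bar{H}_m(\sigma;T_m)y_m(\sigma))$, I would first exploit the FOM condition $H_m(\sigma;T_m)y_m(\sigma) = \beta e_1$. Since $H_m(\sigma;T_m)$ is the top $m\times m$ block of $\bar{H}_m(\sigma;T_m)$, the first $m$ entries of $\beta e_1 - \bar{H}_m(\sigma;T_m)y_m$ cancel, leaving only the last entry. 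Inspecting the bottom row of $\bar{H}_m(\sigma;T_m)$ in~\eqref{eqn:shiftedarnoldimod}, which equals $h_{m+1,m}(\sigma-\tau_m)e_m^*$, gives the compact form $r_m(\sigma) = -h_{m+1,m}(\sigma-\tau_m)(e_m^*y_m)\,v_{m+1}$, so that $\normtwo{r_m(\sigma)} = |h_{m+1,m}|\,|\sigma-\tau_m|\,|e_m^*y_m|$ because $v_{m+1}$ is a unit vector.

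The next step is to compute $e_m^*y_m$ using the generalized eigendecomposition $H_m(\sigma;T_m) = H_mF\Theta F^{-1}$ from~\eqref{eqn:genritz}. Inverting gives $y_m = F\Theta^{-1}F^{-1}H_m^{-1}\beta e_1$, and writing $s_k = e_k^*F^{-1}(H_m^{-1}\beta e_1)$ (the quantity defined in the statement) I would expand $e_m^*y_m = \sum_{k=1}^m (e_m^*f_k)\,\theta_k^{-1}s_k$, where $f_k$ is the $k$th column of $F$. Applying the triangle inequality then yields $\normtwo{r_m(\sigma)} \le |\sigma-\tau_m|\sum_{k=1}^m |h_{m+1,m}|\,|e_m^*f_k|\,|\theta_k^{-1}|\,|s_k|$.

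The final and key step is to recognize that the combination $|h_{m+1,m}|\,|e_m^*f_k|$ is exactly what appears in the eigenvalue residual. From the last line of~\eqref{eqn:residualritzeigen}, $r^{\text{eig}}_k(\sigma) = -h_{m+1,m}(\tau_m+\theta_k-\sigma)(e_m^*f_k)\,v_{m+1}$, so $\rho_k = |h_{m+1,m}|\,|\theta_k+\tau_m-\sigma|\,|e_m^*f_k|$, whence $|h_{m+1,m}|\,|e_m^*f_k| = \rho_k/|\theta_k+\tau_m-\sigma|$. Substituting this identity into the bound above collapses everything to the claimed inequality~\eqref{eqn:fomresidualineq}.

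I expect the main subtlety to be bookkeeping rather than any deep difficulty: one must track the $H_m^{-1}$ factor correctly when inverting the generalized eigendecomposition (so that the scalars $s_k$ in the statement are reproduced exactly), and one must read off the bottom row of $\bar{H}_m(\sigma;T_m)$ accurately from~\eqref{eqn:shiftedarnoldimod}. The conceptual crux is the cancellation that makes $|h_{m+1,m}|\,|e_m^*f_k|$ a shared factor between the solver residual and the eigenvalue residual $\rho_k$; once that is noticed, the estimate is immediate. A point worth checking is that $H_m$ (the unshifted FOM matrix) is invertible, which is needed both for the FOM iterate to be well defined and for the decomposition $y_m = F\Theta^{-1}F^{-1}H_m^{-1}\beta e_1$ to make sense.
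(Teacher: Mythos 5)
Your proposal is correct and follows essentially the same route as the paper's proof: reduce the FFOM residual to the scalar $-h_{m+1,m}(\sigma-\tau_m)(e_m^*y_m)v_{m+1}$, expand $y_m = F\Theta^{-1}F^{-1}H_m^{-1}\beta e_1$ in the generalized eigenbasis, and identify $|h_{m+1,m}|\,|e_m^*f_k|$ with $\rho_k/|\theta_k+\tau_m-\sigma|$ via~\eqref{eqn:residualritzeigen}. The only cosmetic difference is that the paper writes the residual as the exact sum $\sum_k r^{\text{eig}}_k(\sigma)\frac{\sigma-\tau_m}{\tau_m+\theta_k-\sigma}\theta_k^{-1}s_k$ before taking norms, whereas you apply the triangle inequality first and substitute the eigenresidual identity afterward; your remark that invertibility of $H_m$ is tacitly needed is a fair observation that the paper leaves implicit.
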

\begin{proof}
We start by writing the residual in equation~\eqref{eqn:fomgmresresidual}
\[ r_m(\sigma) =  V_m( \beta e_1 - H_m(\sigma;T_m)y_m) - v_{m+1}(\sigma-\tau_m)h_{m+1,m}e_m^*y_m\]
Since, for flexible FOM for shifted systems $y_m = H_m(\sigma;T_m)^{-1}\beta e_1$, the first term in the above expression is zero and we have,
\begin{equation}\label{eqn:fomresidualsimplified}
r_m(\sigma) =  \quad  -v_{m+1}(\sigma-\tau_m)h_{m+1,m}e_m^*H_m(\sigma;T_m)^{-1}\beta e_1  \\ \nonumber
\end{equation}
 Now, using the generalized eigendecomposition in~\eqref{eqn:genritz} $H_m(\sigma;T_m) = H_m F\Theta F^{-1}$. Therefore, we have $H_m(\sigma;T_m)^{-1} = F\Theta^{-1} F^{-1}H_m^{-1}$. We can write this is as a sum of rank-$1$ vectors 
\[ H_m(\sigma;T_m)^{-1} = \sum_{k=1}^m \theta_k^{-1}f_k e_k^*F^{-1}H_m^{-1} \]
where, $e_k$ is the $k$-th canonical basis vector and $f_k$ is the $k$-th column of $F$ for $k=1,\dots,m$. Using the residual of the eigenvalue calculation $r^\text{eig}_k(\sigma)$ in~\eqref{eqn:residualritzeigen} and the expression derived above, 
\begin{align} 
r_m(\sigma) \quad = & \quad -\sum_{k=1}^mv_{m+1}h_{m+1,m} (\sigma -\tau_m) e_m^*f_k\theta_k^{-1}s_k \\ \nonumber
		= & \quad \sum_{k=1}^m r^\text{eig}_k(\sigma) \frac{\sigma - \tau_m}{\tau_m +\theta_k - \sigma} \theta_k^{-1} s_k \\ \nonumber
\end{align}
where, $s_k\define e_k^*F^{-1}H_m^{-1}\beta e_1$. The proof follows from the properties of vector norms.
\end{proof}

The inequality~\eqref{eqn:fomresidualineq} provides insight into the importance of the accuracy of approximate eigenpairs for the convergence of flexible FOM for shifted systems. We follow the arguments in~\cite{meerbergen2003solution}. In particular, the residual is very small if $\sigma \approx \tau_m$, $|\theta_k^{-1}|$, $|s_k|$ or $\rho_k$ are small. We shall ignore the case that $\sigma \approx \tau_m$ for further analysis, i.e. that the shifted system is almost exactly the preconditioned system. The eigenvalue residual norm $\rho_k$ being small implies that the Ritz values are a good approximation to the eigenvalues of $(K+\sigma M)M^{-1}$. This implies that all the eigenvalues in this interval have been computed fairly accurately. We now discuss when $|\theta_k^{-1}|$ is large. When all the values of $\tau_k$ are equal to $\tau$, the approximate eigenvalues $\theta_k$ of $KM^{-1}$ are related to approximate eigenvalues $\lambda_k$ of the preconditioned system $(K+\sigma M)(K+\tau M)^{-1}$ by the 
Cayley transformation $\frac{\lambda_k + \sigma}{\lambda_k + \tau}$. Therefore, $|\theta_k^{-1}|$ is large only if $|\lambda_k + \sigma| \ll |\lambda_k + \tau|$. The term $s_k$ can be rewritten as $s_k =  e_k^*F^{-1}H_m^{-1}V_m^*V_m\beta e_1 = e_k^*F^{-1}H_m^{-1}V_m^* b$. It is readily verified that $e_k^*F^{-1}H_m^{-1}V_m^*$ is orthonormal to all other approximate eigenvectors $V_{m+1}\bar{H}_mZ_m$ and thus, $s_k$ can be interpreted as the component of the right hand side $b$ in the direction of the approximate eigenvector. In other words, $s_k$ is small when the solution $x_m(\sigma)$ has a small component in the direction of $b$.

The analysis for the convergence of flexible FOM for shifted systems can be extended to flexible GMRES as well. The following result bounds the difference in the residuals obtained from $m$ steps using flexible FOM and flexible GMRES. 
\begin{propos}\label{prop:gmres}
 Let $Z_m$, $\bar{H}_m$ and $V_{m+1}$ be computed according to algorithm~\ref{alg:arnoldimod}. Further, from algorithm~\ref{alg:shiftedsolvemod} we define the flexible FOM quantities $y_m^\text{fom}(\sigma) = H_m(\sigma;T_m)^{-1}\beta e_1$, residual $r_m^\text{fom}(\sigma) = V_{m+1}(\beta e_1-\bar{H}_m(\sigma;T_m)y_m^\text{fom}(\sigma))$ and flexible GMRES quantities  $y_m^\text{gmres}(\sigma) = \arg\min_{y\in \mathbb{C}^m}\normtwo{\beta e_1-\bar{H}_m(\sigma;T_m)y}$, residual $r_m^\text{gmres}(\sigma) = V_{m+1}(\beta e_1-\bar{H}_m(\sigma;T_m)y_m^\text{gmres}(\sigma))$. Further, assume that $H_m(\sigma;T_m)$ is invertible. We have the following inequality 
\begin{equation}\label{eqn:fomgmresresdiff}
\normtwo{r_m^\text{fom}(\sigma) - r_m^\text{gmres}(\sigma)} \leq  \frac{\alpha(1+\alpha)}{1+\alpha^2}\normtwo{r_m^\text{fom}} 
\end{equation} 
where, $\eta \define h_{m+1,m}(\sigma - \tau_m)$ and $\alpha \define  \normtwo{\eta H_m^{-*}(\sigma;T_m)e_m}$.
\end{propos}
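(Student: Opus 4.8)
The plan is to work entirely with the coordinate residual vectors, since $V_{m+1}$ has orthonormal columns and hence preserves the $2$-norm. Throughout write $G\define H_m(\sigma;T_m)$, and recall $\eta \define h_{m+1,m}(\sigma-\tau_m)$. Because $\bar H_m$ is upper Hessenberg with last row $h_{m+1,m}e_m^*$ and $\sigma I_m - T_m$ is diagonal, equation~\eqref{eqn:shiftedarnoldimod} gives the block form $\bar H_m(\sigma;T_m) = \left[\begin{smallmatrix} G \\ \eta e_m^* \end{smallmatrix}\right]$. Setting $\tilde r^\text{fom}\define \beta e_1 - \bar H_m(\sigma;T_m)y_m^\text{fom}$ and $\tilde r^\text{gmres}\define \beta e_1 - \bar H_m(\sigma;T_m)y_m^\text{gmres}$ in $\mathbb{C}^{m+1}$, one has $\normtwo{r_m^\text{fom}-r_m^\text{gmres}} = \normtwo{\tilde r^\text{fom} - \tilde r^\text{gmres}}$ and $\normtwo{r_m^\text{fom}} = \normtwo{\tilde r^\text{fom}}$. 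Since $y_m^\text{fom} = G^{-1}\beta e_1$, substituting the block form shows $\tilde r^\text{fom} = -(\eta\, e_m^* y_m^\text{fom})\,e_{m+1}$ points purely along the last coordinate, with $\normtwo{r_m^\text{fom}} = |\eta|\,|e_m^* y_m^\text{fom}|$; this is just the computation already performed for Proposition~\ref{prop:fom}.

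Next I would use the optimality of GMRES: $\tilde r^\text{gmres}$ is orthogonal to $\text{range}(\bar H_m(\sigma;T_m))$, whereas $\tilde r^\text{fom} - \tilde r^\text{gmres} = \bar H_m(\sigma;T_m)(y_m^\text{gmres} - y_m^\text{fom})$ lies in that range. Hence $\tilde r^\text{fom} = \tilde r^\text{gmres} + (\tilde r^\text{fom} - \tilde r^\text{gmres})$ is an orthogonal splitting, and Pythagoras yields $\normtwo{r_m^\text{fom} - r_m^\text{gmres}}^2 = \normtwo{r_m^\text{fom}}^2 - \normtwo{r_m^\text{gmres}}^2$, so it remains only to compute $\normtwo{r_m^\text{gmres}}$. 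As $G$ is invertible, $\bar H_m(\sigma;T_m)$ has full column rank $m$, and its orthogonal complement in $\mathbb{C}^{m+1}$ is spanned by a single unit vector $q$. Solving $\bar H_m(\sigma;T_m)^*q = 0$ in block form gives $q\propto \left(-\bar\eta\,G^{-*}e_m,\ 1\right)$, whose squared norm is $|\eta|^2\normtwo{G^{-*}e_m}^2 + 1 = 1+\alpha^2$ by the very definition $\alpha = \normtwo{\eta\,H_m^{-*}(\sigma;T_m)e_m}$; thus the last entry of the normalized $q$ has modulus $(1+\alpha^2)^{-1/2}$.

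Because $\tilde r^\text{gmres}$ is exactly the component of $\tilde r^\text{fom}$ along $q$ in the splitting above, and $\tilde r^\text{fom}$ has only a last-coordinate component, projecting onto $q$ uses only this last entry and gives $\normtwo{r_m^\text{gmres}} = \normtwo{r_m^\text{fom}}/\sqrt{1+\alpha^2}$. Combining with the Pythagorean identity produces the exact relation $\normtwo{r_m^\text{fom} - r_m^\text{gmres}} = \frac{\alpha}{\sqrt{1+\alpha^2}}\normtwo{r_m^\text{fom}}$, and the stated inequality~\eqref{eqn:fomgmresresdiff} then follows from the elementary estimate $\frac{1}{\sqrt{1+\alpha^2}} \le \frac{1+\alpha}{1+\alpha^2}$ (equivalently $\sqrt{1+\alpha^2}\le 1+\alpha$). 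As an alternative to constructing $q$, one may form the GMRES normal equations $\left(G^*G + |\eta|^2 e_m e_m^*\right)y_m^\text{gmres} = G^*G\,y_m^\text{fom}$ and apply Sherman--Morrison to write $y_m^\text{gmres} - y_m^\text{fom}$ as a rank-one update along $(G^*G)^{-1}e_m$; inserting this into $\bar H_m(\sigma;T_m)(y_m^\text{gmres}-y_m^\text{fom})$ reproduces the same factor.

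The main obstacle is the bookkeeping in the middle step: one must correctly reduce the GMRES orthogonality condition to the single scalar $\alpha^2 = |\eta|^2\,e_m^*(G^*G)^{-1}e_m$ and confirm this equals $\normtwo{\eta\,H_m^{-*}(\sigma;T_m)e_m}^2$; after that identification the remainder is Pythagoras and a one-line scalar inequality. It is worth flagging that the inequality in~\eqref{eqn:fomgmresresdiff} is in fact slightly weaker than the exact equality derived here, the slack being the factor $(1+\alpha)/\sqrt{1+\alpha^2}\ge 1$.
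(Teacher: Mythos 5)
Your proposal is correct, and it reaches the stated bound by a genuinely different route than the paper. The paper's proof is algebraic: it rewrites the GMRES least-squares problem as the normal equations $\left(H_m^*(\sigma;T_m)H_m(\sigma;T_m)+|\eta|^2e_me_m^*\right)y_m^\text{gmres}=H_m^*(\sigma;T_m)\beta e_1$, applies the Sherman--Morrison identity to exhibit $y_m^\text{gmres}(\sigma)-y_m^\text{fom}(\sigma)$ as an explicit rank-one correction, and then estimates $\normtwo{r_m^\text{fom}-r_m^\text{gmres}}\le\normtwo{\bar{H}_m(\sigma;T_m)H_m(\sigma;T_m)^{-1}}\,\alpha\,|\eta e_m^*y_m^\text{fom}|/(1+\alpha^2)$, finishing with $\normtwo{\bar{H}_m(\sigma;T_m)H_m(\sigma;T_m)^{-1}}\le 1+\alpha$; that last submultiplicative estimate is the sole source of the factor $1+\alpha$ and the only genuine inequality in the paper's argument. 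You instead exploit the geometry of the $(m+1)$-dimensional coordinate space: since $H_m(\sigma;T_m)$ is invertible, $\bar{H}_m(\sigma;T_m)$ has full column rank, its range has a one-dimensional orthogonal complement which you compute explicitly as the span of $q\propto\left(-\bar{\eta}\,H_m^{-*}(\sigma;T_m)e_m,\;1\right)$ with $\normtwo{q}^2=1+\alpha^2$, and the FOM coordinate residual points purely along $e_{m+1}$. The GMRES optimality condition makes $\tilde r^\text{fom}=\tilde r^\text{gmres}+\bar{H}_m(\sigma;T_m)(y_m^\text{gmres}-y_m^\text{fom})$ an orthogonal splitting, and Pythagoras yields the exact identities $\normtwo{r_m^\text{gmres}}=\normtwo{r_m^\text{fom}}/\sqrt{1+\alpha^2}$ and $\normtwo{r_m^\text{fom}-r_m^\text{gmres}}=\alpha\normtwo{r_m^\text{fom}}/\sqrt{1+\alpha^2}$, from which \eqref{eqn:fomgmresresdiff} follows via the elementary $\sqrt{1+\alpha^2}\le 1+\alpha$; I verified each of these steps and they are sound. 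Your route buys strictly more than the paper's: an equality (essentially the classical FOM/GMRES residual relation) that is sharper than \eqref{eqn:fomgmresresdiff} by the factor $(1+\alpha)/\sqrt{1+\alpha^2}\ge 1$, as you correctly flag at the end. What the paper's route buys is the explicit Sherman--Morrison formula for $y_m^\text{gmres}-y_m^\text{fom}$, which is of independent interest (it shows the GMRES solution as a computable rank-one update of the FOM solution); your closing alternative paragraph correctly identifies this as an equivalent derivation. One further small point in your favor: for complex shifts $\eta$ is complex, and your consistent use of $|\eta|^2$ is the correct reading of the paper's $\eta^2$ in the normal-equations step.
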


\begin{proof} We begin by the following observation from equation~\eqref{eqn:fomresidualsimplified} $r_m^\text{fom}(\sigma) = -\eta e_m^*y_m^\text{fom} v_{m+1}$ and 
\[r_m^\text{fom}(\sigma) - r_m^\text{gmres}(\sigma) = -V_{m+1}\bar{H}_m(\sigma;T_m) \left( y_m^\text{fom}(\sigma) - y_m^\text{gmres} (\sigma)\right) \]
Next, we look at the solution to the GMRES least squares problem which can be written as the normal equations 
\[ \bar{H}_m^*(\sigma;T_m)\bar{H}_m(\sigma;T_m)y_m^\text{gmres} = \bar{H}_m^*(\sigma;T_m)\beta e_1 = H_m^*(\sigma;T_m)\beta e_1\]
This can be rewritten as 
\begin{align*}
 \left(H_m^*(\sigma;T_m)H_m(\sigma;T_m)+\eta^2e_me_m^*\right)y_m^\text{gmres}(\sigma) =& \quad  H_m^*(\sigma;T_m)\beta e_1 \\ \nonumber
\left(H_m(\sigma;T_m) + \eta^2H_m^{-*}e_me_m^*\right)y_m^\text{gmres}(\sigma) = & \quad \beta e_1 
\end{align*}
In other words, the solution to the GMRES subproblem is a rank-one perturbation of the FOM subproblem. Using the Sherman-Morrison identity 
\[ y^\text{gmres}_m(\sigma) = \underbrace{H_m(\sigma;T_m)^{-1}\beta e_1}_{ =  y^\text{fom}_m(\sigma) } - H_m(\sigma;T_m)^{-1}\eta H_m^{-*}(\sigma;T_m)e_m \frac{(\eta e_m^*H_m(\sigma;T_m)^{-1}\beta e_1)}{ 1+  \normtwo{\eta H_m^{-*}(\sigma;T_m)e_m}^2}\] 
Then, the residual difference between FOM and GMRES can be bounded as 
\[\normtwo{r_m^\text{fom}(\sigma) - r_m^\text{gmres}(\sigma)}  \leq \normtwo{\bar{H}_m(\sigma;T_m)H_m(\sigma;T_m)^{-1}} \frac{\alpha|\eta e_m^*y_m^\text{fom}(\sigma)|}{1+  \alpha^2}\]
The inequality~\eqref{eqn:fomgmresresdiff} follows from the following observations $\normtwo{\bar{H}_m(\sigma;T_m)H_m(\sigma;T_m)^{-1}} \leq 1 +  \normtwo{\eta H_m^{-*}(\sigma;T_m)e_m}$ and from~\eqref{eqn:fomresidualsimplified} we have $\normtwo{r_m^\text{fom}}  =  |\eta e_m^*y_m^\text{fom}(\sigma)|$.
\end{proof}
 
If $\normtwo{\eta H_m^{-*}(\sigma;T_m)e_m}$ is large, then the difference between the two residuals can be large. This happens either when $\eta$ is large or $H_m(\sigma;T_m)$ is close to singular. In this case, flexible GMRES can stagnate and further progress may not occur. We now discuss situations in which breakdown occurs, i.e. $h_{m+1,m} =0$. If $h_{m+1,m} \neq 0$ and $H_m$ is full rank, then it can be shown from equation~\eqref{eqn:arnoldimod} that $\Span{MZ_m} \subseteq \Span{V_{m+1}} $ and from equation~\eqref{eqn:shiftedarnoldimod}, it follows that $\Span{(K+\sigma_j M)Z_m} \subseteq \Span{V_{m+1}}$. Further, $h_{m+1,m} = 0$ if and only if $x_m(\sigma_j)$ is the exact solution and $H_m(\sigma_j;T_m)$ is non-singular. The argument closely follows~\cite{saad1993flexible} and will not be repeated here.





\section{Inexact preconditioning}\label{sec:inexact}

We observe that to compute vectors $z_k$ for $k =1,\dots,m$ in equation~\eqref{eqn:zmmod}, we have to invert matrices of the form $K+\tau_kM$. When the problem sizes are large, iterative methods may be necessary to invert such matrices, resulting in a variable preconditioning procedure in which a different preconditioning operator is applied at each iteration. More precisely, for $k=1,\dots,m$,
\begin{equation} \label{eqn:zmmoditer}
\tilde{z}_k \approx (K + \tau_k M)^{-1}v_k \qquad p_k \define v_k - (K +\tau_k M) \tilde{z}_k
\end{equation}
where, $p_k$ is the residual that results after the iterative solver has been terminated. To simplify the discussion, we assume that the termination criteria for the iterative solver is such that $\normtwo{p_k} \leq \varepsilon \underbrace{\normtwo{v_k}}_{= 1} = \varepsilon$, for some $\varepsilon$. We  closely follow the approach in~\cite{simoncini2003theory}. The new flexible Arnoldi relationship is now, 

\begin{equation}
\label{eqn:flexshiftedarnoldimod}
(K + \sigma_j M)\tilde{Z}_m  + P_m = V_{m+1}\bar{H}_m(\sigma_j;T_m)  \qquad j=1,\dots,n_f
\end{equation} 
where, $\tilde{Z}_m = [\tilde{z}_1,\dots,\tilde{z}_m]$ and $P_m = [p_1,\dots,p_m]$ and $\bar{H}_m(\sigma_j;T_m)$ is defined in equation~\eqref{eqn:shiftedarnoldimod}. By using inexact applications of the preconditioner, the vectors $v_k$ for $k=1,\dots,m+1$ are no longer the same vectors generated from algorithm~\ref{alg:arnoldimod}. In particular, $\Span{V_m}$ is no longer a Krylov subspace generated by $A$. However, by construction, $V_m$ is still an orthogonal matrix.

Having constructed the matrix $\tilde{Z}_m$, we seek approximate solutions spanned by the columns of $\tilde{Z}_m$, i.e., solutions of the form $x_m(\sigma_j) = \tilde{Z}_m y_m(\sigma_j)$. The true residual corresponding to the approximation solution  $x_m(\sigma_j) = \tilde{Z}_m y_m(\sigma_j)$ can be computed as follows,
\begin{align*}
r_m(\sigma_j)\quad   = &  \quad b - (K+\sigma_jM)\tilde{Z}_my_m(\sigma_j) \\ 
               = &  \quad b - V_{m+1}\bar{H}_m(\sigma_j;T_m)y_m(\sigma_j) + P_m y_m(\sigma_j) \\ 
	       = &  \quad V_{m+1}\left(\beta e_1 - \bar{H}_m(\sigma_j;T_m)y_m(\sigma_j)\right) + P_my_m(\sigma_j) 
\end{align*}

The columns of the matrix $P_m$ are not computed in practice because they require an additional matrix-vector product with $K+\tau_k M$. As a result, computing the true residual is expensive. However, in order to monitor the convergence of the iterative solver, we need bounds on the true residual. Using such bounds, we can derive stopping criteria for the flexible Krylov solvers for shifted systems with inexact preconditioning. To do this, we first derive bounds on the norm of inexact residual $\tilde{r}_m(\sigma_j)$ and a bound on the difference between the true and the inexact residual $\normtwo{r_m(\sigma_j)-\tilde{r}_m(\sigma_j)}$. A simple application of the triangle inequality for vector norms, leads us to the desired bounds on the true residual

The inexact residual $ \tilde{r}_m(\sigma_j)  $ defined as 
\[ \tilde{r}_m(\sigma_j)  \define V_{m+1}\left(\beta e_1 - \bar{H}_m(\sigma_j;T_m)y_m(\sigma_j)\right)  \] 
The expression for$ \tilde{r}_m(\sigma_j)$ is similar to the exact residual  $\tilde{r}_m(\sigma_j)$ ignoring the error due to early termination of the inner iterative solver, i.e., $P_my_m(\sigma_j)$. It is easy to verify that $\normtwo{\tilde{r}_m(\sigma_j) } = \normtwo{\beta e_1 - \bar{H}_m(\sigma_j;T_m)y_m(\sigma_j)} $. We now derive an expression for the norm of the difference between the true and the inexact residuals,

\begin{align}
\normtwo{r_m(\sigma_j)-\tilde{r}_m(\sigma_j)} & =  \normtwo{P_my_m(\sigma_j)} =  \normtwo{\sum_{k=1}^m e_k^Ty_m(\sigma_j)p_k}  \nonumber\\
					& \leq     \sum_{k=1}^m|e_k^Ty_m(\sigma_j)|\normtwo{p_k}  \nonumber \\
					& \leq    \varepsilon\sum_{k=1}^m|e_k^Ty_m(\sigma_j)| = \varepsilon \lVert y_m(\sigma_j)\rVert_1 \label{eqn:resdiff}
\end{align}
Finally, the norm of the true residual $r_m(\sigma_j)$ can be bounded using the following relation
\begin{align}
\normtwo{r_m(\sigma_j)} \leq & \quad \normtwo{r_m(\sigma_j)-\tilde{r}_m(\sigma_j)} + \normtwo{\tilde{r}_m(\sigma_j)}  \nonumber \\
			\leq &  \quad \varepsilon \norm{y_m(\sigma_j)}{1}  + \normtwo{\beta e_1 - \bar{H}_m(\sigma_j;T_m)y_m(\sigma_j)}  \label{eqn:trueresidual}
\end{align}

This bound on the true residual, gives us a convenient expression to monitor the convergence of the iterative solver for each system, corresponding to a given shift $\sigma_j$.

We can also derive specialized results for the flexible FOM/GMRES for shifted systems with inexact preconditioning. The approach used is and argument similar to~\cite[Proposition 4.1]{simoncini2003theory}. Let $r^\text{fom}_m(\sigma_j) \define b - (K+\sigma_j M)\tilde{Z}_my_m^\text{fom}(\sigma_j)$ and $r^\text{gmres}(\sigma_j) \define b - (K+\sigma_j M)\tilde{Z}_my_m^\text{gmres}(\sigma_j)$ be the true residual, respectively resulting from the flexible FOM/GMRES for shifted systems. We have the following error bounds

\begin{align*}
\normtwo{V_m^*r_m^\text{fom}(\sigma_j)} \leq & \quad \varepsilon \norm{y_m^\text{fom}(\sigma_j)}{1} \\ 
\normtwo{\left(V_{m+1}\bar{H}_m(\sigma_j;T_m)\right)^*r_m^\text{gmres}} \leq & \quad \varepsilon \normtwo{\bar{H}_m(\sigma_j;T_m)} \norm{y_m^\text{fom}(\sigma_j)}{1}  
\end{align*}

One of main results of the paper~\cite{simoncini2003theory} is that they provide theory for why the residual norm due inexact preconditioning can be allowed to grow at the later outer iterations. In particular, they provide computable bounds for the monitoring the outer Krylov solver residual when the termination criteria for the inner preconditioning is allowed to change at each  iteration, from which efficient termination criteria can be derived. We have not pursued this issue and the reader is referred to~\cite{simoncini2003theory} for further details.

\section{Application to Oscillatory Hydraulic Tomography}\label{sec:application}

In this section, we briefly review the application of Oscillatory Hydraulic Tomography and the Geostatistical approach for solving the resulting inverse problem. 
\subsection{The Forward Problem}\label{sec:forward}

The equations governing ground water flow through an aquifer for a given domain $\Omega$ with boundary $\partial \Omega = \partial \Omega_D \cup \partial \Omega_N, \partial \Omega_D \cap \partial \Omega_N = \emptyset$ 
are given by,

\begin{align} \label{eqn:timedomain}
S_s(\bx) \frac{\partial \phi(\bx,t)}{\partial t} - \nabla \cdot \left(K(\bx) \nabla \phi(\bx,t)\right) & =   q(\bx,t), & \bx &\in \Omega \\ \nonumber
\phi(\bx,t) & = 0, & \bx & \in \partial \Omega_D  \\ \nonumber 
\nabla \phi(\bx,t) \cdot \textbf{n} & =  0, & \bx &\in \partial \Omega_N \\ \nonumber 
\end{align}
where $S_s(\bx)$ [L$^{-1}$] represents the specific storage and $K (\bx)$ [L/T] represents the hydraulic conductivity. In the case of 
one source oscillating at a fixed frequency $\omega$ [radians/T] , $q(\bx,t)$ is given by 
\begin{equation} \label{eq:oscillation}
 q(\bx,t) = Q_0\delta(\bx-\bx_s) \cos(\omega t) 
 \end{equation}
To model periodic simulations, we will assume the source to be a point source oscillating at a known frequency $\omega$ and peak amplitude $Q_0$ at the source location $\bx_s$. In the case of multiple sources oscillating at distinct frequencies, each source is modeled independently with its corresponding frequency as in~\eqref{eq:oscillation}, and then combined to produce the total response of the aquifer.

Since the solution is linear in time, we assume the solution (after some initial time has passed) can be represented as
\begin{equation} \label{eqn:measurementequation}
 \phi(\bx,t) = \Re(\Phi(\bx) \exp(i\omega t) )
\end{equation}
where $\Re(\cdot)$ is the real part and $\Phi(\bx)$ is known as the phasor, is a function of space only and contains information about the phase and amplitude of the signal. Assuming this solution, the equations~\eqref{eqn:timedomain} in the phasor domain are, 
\begin{align} \label{eqn:phasor1}
- \nabla \cdot \left(K(\bx) \nabla \Phi(\bx)\right) + i\omega S_s(\bx)  \Phi(\bx)  = &  \quad Q_0\delta(\bx-\bx_s), & \bx \in \Omega \\ \nonumber
\Phi(\bx)  = & \quad 0, &\quad  \bx \in \partial \Omega_D \\ \nonumber
\nabla \Phi(\bx) \cdot \textbf{n}  = & \quad 0, &\qquad \bx \in \partial \Omega_N \nonumber
\end{align}
The differential equation~\eqref{eqn:phasor1} along with the boundary conditions are discretized using FEniCS~\cite{LoggMardalEtAl2012a, LoggWells2010a, LoggWellsEtAl2012a} by using standard linear finite elements. Solving it for several frequencies results in system of shifted equations of the form 
\begin{equation}
 \label{eqn:genshifted}
  \left( K + \sigma_j M \right) x_j  = b \qquad j=1,\dots,n_f 
\end{equation}
where, $K$ and $M$ are the stiffness and mass matrices, respectively, that arise precisely from the discretization of~\eqref{eqn:phasor1}.

\subsection{The Geostatistical Approach}
The Geostatistical approach (described in the following papers~\cite{kitanidis1995quasi,kitanidis2010bayesian,kitanidis2007on}) is one of the prevalent approaches for solving stochastic inverse problems. The idea is to represent the unknown field as the sum of a few deterministic low-order polynomials and a stochastic term that models small-scale variability. Inference from the measurements is obtained by invoking the Bayes' theorem, through the posterior probability density function which is the product of two parts - likelihood of the measurements and the prior distribution of the parameters. Let $s(\bx) \in \mathbb{R}^{N_s}$ be the function to be estimated, here the log conductivity, and let it be modeled by a Gaussian random field. After discretization, the field can be written as $s \sim {\cal{N}}(X\beta,Q)$. Here $X$ is a matrix of low-order polynomials, $\beta$ are a set of drift coefficients to be determined and $Q$ is a covariance matrix with entries $Q_{ij} = \kappa(\bx_i,\bx_j)$, and $\kappa(\cdot,\cdot)$ 
is 
a generalized covariance kernel~\cite{christakos1984problem}. The measurement equation can be written as, 
\begin{equation}
 y = h(s) + v, \qquad v \sim {\cal{N}}(0,R)
\end{equation}
where $y \in \mathbb{R}^{N_y}$ represents the noisy measurements and $v$ is a random vector of observation error with mean zero and covariance matrix $R$. The matrices $R$, $Q$ and $X$ are part of a modeling choice and more details to choose them can be obtained from the following references~\cite{kitanidis1995quasi}. The operator $h: \mathbb{R}^{N_s}\rightarrow \mathbb{R}^{N_y}$ is known as the parameter-to-observation map or {\it measurement operator}, with entries that are the coefficients of the oscillatory terms in the expression, 
\begin{equation} 
\label{eqn:measurement}
 \int_\Omega \Re\left\{e^{i\omega t}\Phi(\bx) \delta(\bx-\bx_i)\right\} d\bx
\end{equation}
where $\bx_i$, is the location of the measurement sensor and $i=1,\dots,n_y$, where  $n_y$ is the number of measurement locations. At each measurement location, two coefficients are measured for every frequency. In all, we have $N_y=2n_fn_y$ measurements, where $n_f$ is the number of frequencies.

Following the geostatistical method for quasi-linear inversion \cite{kitanidis1995quasi}, we compute $\hat{s}$ and $\hat\beta$ corresponding to the maximum-a-posteriori probability which is equivalent to computing the solution to a weighted nonlinear least squares problem. To solve the optimization problem, the Gauss-Newton algorithm is used. Starting with an initial estimate for the field $s_0$, the procedure is described in algorithm~\ref{alg:quasi}.

\begin{algorithm}[!ht]
\begin{algorithmic}[1]
\STATE  Compute the $N_y \times N_s$ Jacobian $J$ as, 
\begin{equation} 
 J_k = \at{\frac{\partial{h}}{\partial{s}}} {s = {s}_k} 
\end{equation}
\STATE  Solve the system of equations, 

\begin{equation} \label{eq:inversion}
\left( \begin{array}{cc} 
        J_k Q J_k^T  + R & J_kX \\
	 \left(J_k X\right)^T & 0 
       \end{array}
       \right) 
       \left( \begin{array}{c}{\xi_{k+1}}\\ {\beta_{k+1}} \end{array} \right) = 
\left( \begin{array}{c} y - h({s}_k) + J_k{s}_k \\ 0 \end{array} \right) 
\end{equation}

\STATE The update $s_{k+1}$ is computed by, 
\begin{equation} 
 s_{k+1} = X \beta_{k+1} + Q J_k^T \xi_{k+1} 
\end{equation}

\STATE Repeat steps $1-3$ until the desired tolerance has been reached. (If necessary, add a line search).
\end{algorithmic}
\caption{Quasi-linear Geostatistical Approach}
\label{alg:quasi}
\end{algorithm}

Algorithm~\ref{alg:quasi} requires, at each iteration, computation of the matrices $QJ_k^T$ and $J_kQJ_k^T$. Since the prior covariance matrix $Q$ is dense, a  straightforward computation of $QJ_k^T$ can be performed in ${\cal{O}}(N_yN_s^2)$. However, for fine grids, i.e., when the number of unknowns $N_s$ is large, storing $Q$ can be expensive in terms of memory and computing $QJ_k^T$ can be computationally expensive. For regular equispaced grids and covariance kernels that are stationary or translation invariant, an FFT based method can be used to reduce the storage costs of the covariance matrix $Q$ to ${\cal{O}}(N_s)$ and cost of matrix-vector product to ${\cal{O}}(N_s\log N_s)$. For irregular grids, the Hierarchical matrix approach can be used to reduce the storage costs and cost of approximate matrix-vector product to ${\cal{O}}(N_s\log N_s)$ for a wide variety of covariance kernels~\cite{saibaba2012efficient}. Thus, in either situation, the cost for computing $QJ_K^T$ can be done in ${\cal{O}}(N_yN_s\
\log N_s)$ and the cost of computing $J_kQJ_k^T$ is  ${\cal{O}}(N_sN_y\log N_s + N_sN_y)$.

\subsection{Sensitivity Matrix computation}\label{sec:sensitivity}
Computing the Jacobian matrix $J_k$ at each iteration is often an expensive step. Although explicit analytical expressions for the entries are nearly impossible, several approaches exist. One simple approach is to use finite differences, but this approach is expensive because it requires as many $N_s+1$ runs of the forward problem, i.e. one more than the number of parameters to be estimated. For large problems and on finely discretized grids, the number of unknowns can be quite large and so this procedure is not feasible. 

To reduce the computational cost associated with calculating the sensitivity matrix we use the adjoint state method (see for example,~\cite{sun1990coupled}). This approach is exact and is computationally advantageous when the number of measurements is far smaller than the number of unknowns. For a complete derivation of the adjoint state equations for oscillatory hydraulic tomography, refer to~\cite{cardiff2012multi}. For the type of measurements described in~\eqref{eqn:measurement}, the entries of the sensitivity matrix can calculated by the following expression for $j=1,\dots,N_s$ 
\begin{equation}
\label{eqn:sensitivity}
 \frac{\partial h}{\partial s_j} = \int_\Omega \Re\left\{ e^{i\omega t}\left( \left[i \omega \frac{\partial S_s(\bx)}{\partial s_j} \Phi - \frac{\partial Q_0}{\partial s_j} \right] \Psi_{\omega} + \frac{\partial K(\bx)}{\partial s_j} \nabla \Phi \cdot \nabla \Psi_{\omega}\right)  \right\}d\bx
\end{equation}
Since at measurement location corresponding to each frequency, two measurements are obtained from the coefficients of the oscillatory terms, so the Jacobian matrix has $N_y\times N_s$ entries where, $N_y = 2n_fn_y$. Here, $\Psi_{\omega}$ is the known as the {\it adjoint solution} that depends on the measurement location $\bx_m$ and the forcing frequency $\omega$. It satisfies the following system of equations 
\begin{align}\label{eqn:adjoint} 
 - \nabla \cdot \left(K \nabla \Psi_{\omega } \right) +  i \omega S_s\Psi_{ \omega}  = & \quad- \delta(\bx-\bx_m), &\quad \bx \in \Omega \\ \nonumber 
\Psi_{\omega}  = & \quad 0, &\quad \bx \in \partial \Omega_D  \\ \nonumber
\nabla \Psi_{\omega}(\bx) \cdot \textbf{n}  = & \quad 0, &\quad \bx \in \partial \Omega_N \nonumber
\end{align}
where, $\bx_m$ is the measurement location and $\omega$ is the particular frequency. The procedure for calculating the sensitivity matrix can thus be summarized as follows. 
\begin{algorithm}[!ht]
\begin{algorithmic}
 \STATE 1. For a given field ${s}$, solve the forward problem for $\Phi$. 
 \STATE 2. For each measurement and frequency $\omega$, solve the adjoint problem for $\Psi_\omega$. 
 \STATE 3. Compute the integral in~\eqref{eqn:sensitivity} to calculate the sensitivity. 
\end{algorithmic}
\caption{Computing Sensitivity Matrix}
\label{alg:sensitivity}
\end{algorithm}
Since~\eqref{eqn:sensitivity} is evaluated for all $s_j$ for each measurement, the adjoint state method requires only $N_y+1$ forward model solves to compute the sensitivity matrix. Thus, when the number of measurements is far fewer than the number of unknowns, the adjoint state method provides a much cheaper alternative for computing the entries of the Jacobian matrix. This is typically the case in hydraulic tomography, where having several measurement locations is infeasible because it requires digging new wells.

Further, we realize that equation~\eqref{eqn:sensitivity} takes the same form as equation~\eqref{eqn:genshifted} for multiple frequencies. Thus, we can use the algorithms developed in section~\ref{sec:krylov} to solve the system of equations~\eqref{eqn:sensitivity} for as many right hand sides as measurements. It is possible to devise algorithms for multiple right hand sides in the context of shifted systems~\cite{meerbergen2010lanczos,darnell2008deflated} but we will not adopt this approach.

\section{Numerical Experiments and Results} \label{sec:numerical}

We present numerical results for the Krylov subspace solvers and its application to OHT. As mentioned before, we use the FEniCS software~\cite{LoggMardalEtAl2012a, LoggWells2010a, LoggWellsEtAl2012a} to discretize the appropriate partial differential equations. We use the Python interface to FEniCS, with uBLASSparse as the linear algebra back-end. For the direct solvers we use SuperLU~\cite{superlu99} package that is interfaced by Scipy whereas for the iterative solver we use an algebraic multigrid package PyAMG~\cite{BeOlSc2011}, with smoothed aggregation along with BiCGSTAB iterative solver. In the following sections, for brevity, we only most results for FOM solver but we observed similar results for the GMRES method as well.  This is also suggested by the result in proposition~\ref{prop:gmres}.	

\subsection{Krylov subspace solver}\label{sec:solverexperiments}
In this section, we present some of the results of the algorithms that we have described in section~\ref{sec:krylov}. We now describe the test problem that we shall use for the rest of the section. We consider a $2$D aquifer in a rectangular domain with Dirichlet boundary conditions on the boundaries. For the log-conductivity field $\log K(\bx)$, we consider a random field generated using an exponential covariance kernel $\kappa(\bx,\textbf{y}) = 4\exp(-2\normtwo{\bx-\textbf{y}}/L)$ using the algorithm described in \cite{dietrich1993fast}. Other parameters used for the model problem are summarized in table~\ref{tab:parameters}. We choose $200$ frequencies evenly spaced between the minimum and maximum frequencies, which results in $200$ systems each of size $90601$.

\begin{table}[h] 
 \centering
\begin{tabular}{|l|l|l|}
\hline
Definition &  Parameters  & Values \\
 \hline 
Aquifer length&  L (m) & 500 \\
Specific storage &  $\log S_s$ (m$^{-1}$) &  $-11.52$\\
 Mean conductivity  &  $\mu(\log K)$ (m/s) & $ -11.02$ \\
Variance of conductivity   & $\sigma^2(\log K) $ & $1.42$  \\
 Frequency range &$\omega$ ($s^{-1}$) &  $[\frac{2\pi}{600},\frac{2\pi}{3}]$ \\ 
\hline
\end{tabular}
\caption{Parameters Chosen For Test Problem}\label{tab:parameters}
\end{table}

First, we motivate the need for multiple preconditioners to solve the shifted system of equations~\eqref{eqn:genshifted}. We begin by looking at the number of iterations taken by restarted GMRES without a preconditioner and using a single preconditioner. We choose a preconditioner of the form $K+\tau M$, for five different values of $\tau$. For illustration purposes, we use a direct solver to invert the preconditioned systems. These values represent the minimum frequency, the average frequency and the maximum frequency in the parameter range. The number of iterations corresponding to restarted GMRES ($30$) for each of the system is computed and displayed in figure~\ref{fig:itercountsingleprecond}. We observe that the number of iterations corresponding to the systems increases as the frequency of the system decreases. When we use a preconditioner $K + \tau M$, the systems with frequencies nearby $|\tau|$ converge rapidly. However, systems with frequencies further from $|\tau|$ converge more slowly, the 
further away they are from the frequency of the preconditioned system $|\tau|$. This is consistent with the analysis in section~\ref{sec:geneigen} and in particular, proposition~\ref{prop:fom}. Thus, no single preconditioner effectively preconditions all the systems in the given frequency range. Not surprisingly, choosing $|\tau|$ in the center of the frequency range seems to be the best choice. Thus, in order to make the iterative method competitive, we consider using multiple preconditioners to solve the shifted system~\eqref{eqn:genshifted}.

\begin{figure}[!htbp]
\centering
\includegraphics[scale=0.29]{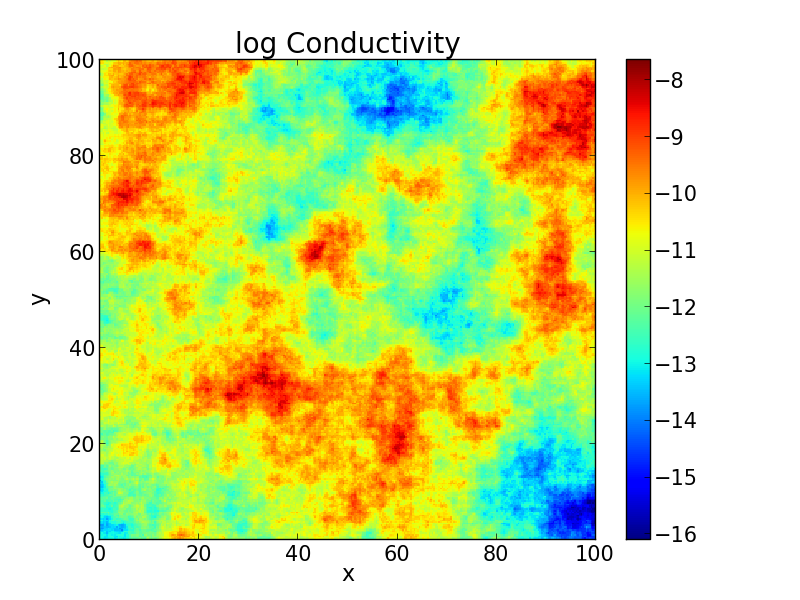}
\includegraphics[scale=0.29]{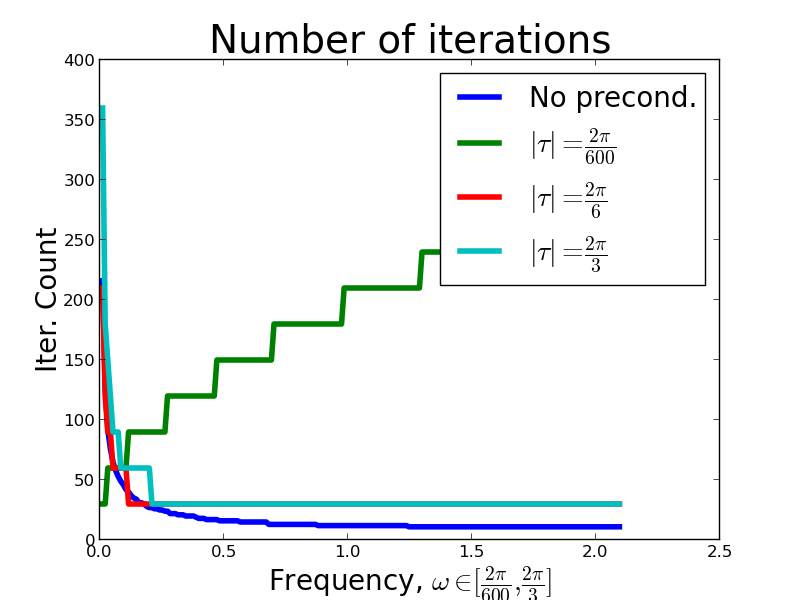}
\caption{(left) The log conductivity field that we use for the test problem with $90601$ grid points, and (right) iteration count for restarted GMRES ($30$) for unpreconditioned case and also with single preconditioners of the form $K + \tau M$, with $|\tau| \in \{ \frac{2\pi}{600},\frac{2\pi}{6},\frac{2\pi}{3} \}$. Results indicate that for the particular choices made, $|\tau| = \frac{2\pi}{6}$ which is roughly at the center of the frequency range, performs best.}
\label{fig:itercountsingleprecond}
\end{figure}

We choose preconditioners of the form $K + \tau_k M , k = 1,\dots,m$, where $m$ is the maximum dimension of the Arnoldi iteration. From figure~\ref{fig:itercountsingleprecond}, it is clear that the systems with smaller frequencies converge slower, so we choose the values of $|\tau_k|$ that are distributed closer to the origin. In particular, we choose the values of $\tau_k$ such that they are evenly spaced on a log scale in the domain $\omega \in [\frac{2\pi}{600},\frac{2\pi}{3}]$. For example, for $n_p = 5$, the distribution of $|\tau|$ is illustrated in figure~\ref{fig:multipletau}. Now, let the possible values that $\tau$ can take be labeled as $\bar{\tau} = \{ \bar{\tau}_1,\dots,\bar{\tau}_{n_p} \}$  where, $n_p$ is the number of distinct number of preconditioner frequencies. Then, the first $m_1$ values of $\tau_k$ are assigned $\bar{\tau}_1$, the next $m_2$ values of $\tau_k$ are assigned $\bar{\tau}_2$ and so on. We also have $m = \sum_{k=1}^{n_p}m_k$. We pick $m_k = m/n_p$. If the algorithm has not 
converged in $m$ iterations, we restart using the method in section~\ref{sec:restarting} if we are using a direct solver as the preconditioner. Else, we recycle the same sequence of preconditioners. We implemented both algorithms to invert the preconditioner matrices - using direct solver and using an iterative solver which is an algebraic multigrid preconditioned BiCGSTAB. Using $n_p = 5$ and $m=40$ along with the scheme to choose the preconditioner frequencies described above, we observed that the number of iterations (and hence, matrix-vector products) in both cases were less than $40$.  

\begin{figure}
\centering
\includegraphics[scale = 0.5]{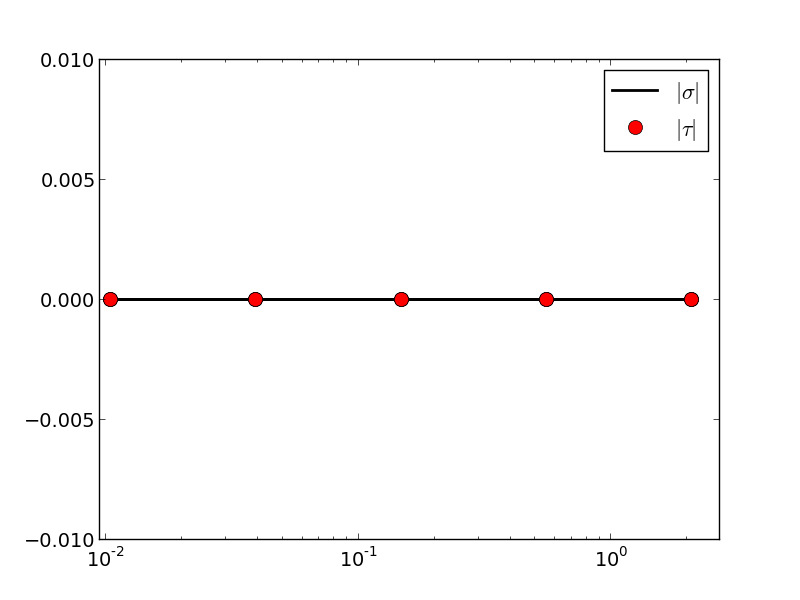}
\caption{Choice of frequencies $|\tau|$ for the preconditioners $ K + \tau_k M$. Here, for illustration, we choose $n_p = 5$ and the distinct values of the preconditioned frequencies are evenly spaced on a log scale in the domain $\omega \in [\frac{2\pi}{600},\frac{2\pi}{3}]$. }
\label{fig:multipletau}
\end{figure}

Finally, we present the comparison in terms of the run time of our algorithm compared to solving each system using a direct solver. The results are presented in figure~\ref{fig:timing}. In the plots, ``Direct'' implies that every system is solved individually by a direct solver. ``Flexible'' algorithm uses $5$ different preconditioners (see figure~\ref{fig:multipletau} with a direct solver for inverting the preconditioners, and solves the FOM subproblem, whereas ``Inexact'' uses the preconditioners and inverts the preconditioners using an iterative solver. We see that both the ``Flexible'' and ``Inexact'' algorithms outperform the ``Direct'' approach even for a small number of frequencies. A relative tolerance of $\normtwo{r_m(\sigma_j)} / \normtwo{r_0(\sigma_j)} \leq 10^{-10}$ was used as stopping criteria for all systems $j=1,\dots,n_f$ and all systems converged within $40$ iterations. Although, the ``Inexact'' algorithm seems to behave nearly independent of number of frequencies, its runtime is longer 
than the ``Direct'' approach. This is due to the fact that the PyAMG solver requires an additional $17$ matvecs on average per inner iteration, totaling $706$ matvecs with $K+\tau_kM$ with $k=1,\dots,m$ and $m=40$.

\begin{figure}
\centering
\includegraphics[scale=0.5]{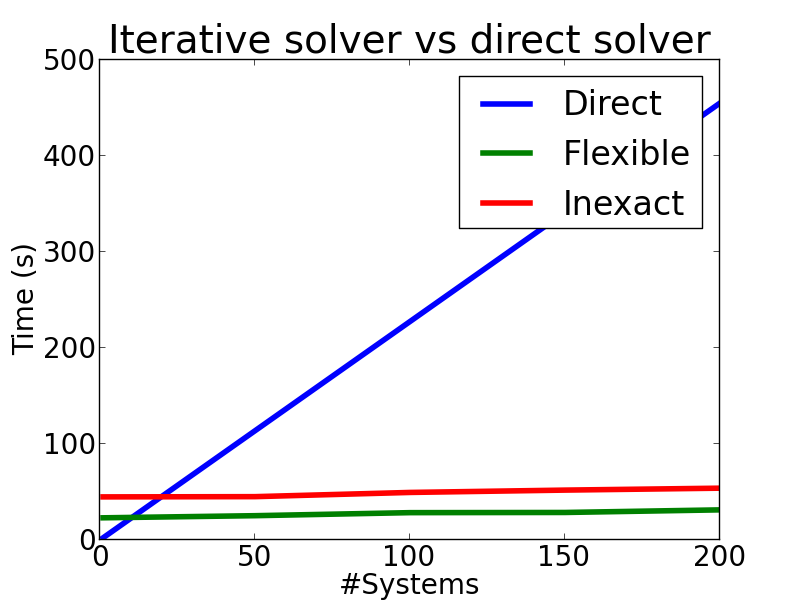}
\caption{Comparison of time for the different algorithms. ``Direct'' implies that every system is solved individually by a direct solver. ``Flexible'' algorithm uses $5$ different preconditioners (see figure~\ref{fig:multipletau} with a direct solver for inverting the preconditioners, and solves the FOM subproblem, whereas ``Inexact'' uses the preconditioners and inverts the preconditioners using an iterative solver. We see that both the ``Flexible'' and ``Inexact'' algorithms outperform the ``Direct'' approach even for a small number of frequencies. The system size is $90601$. }
\label{fig:timing}
\end{figure}

\begin{figure}
\centering
\includegraphics[scale = 0.5]{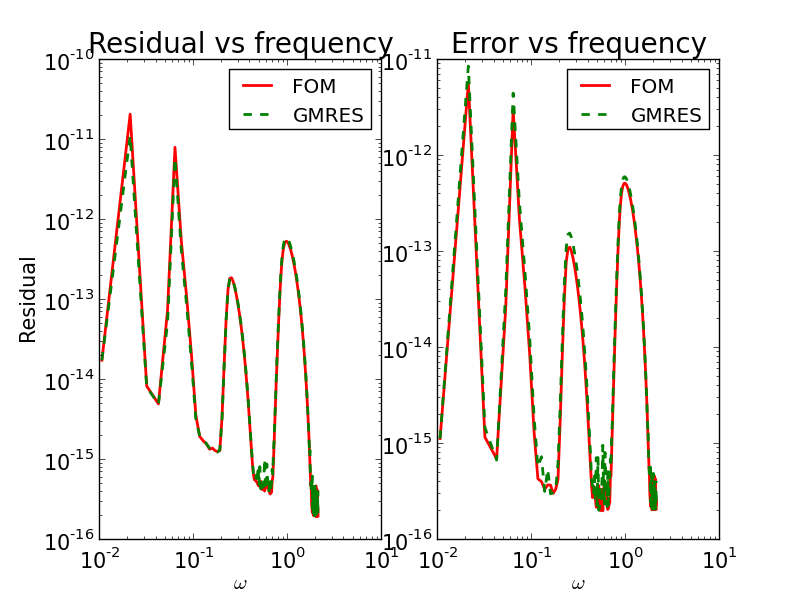}
\caption{(left) residual vs frequencies for FOM and GMRES and (right) error vs frequency for FOM and GMRES. All systems converged within 40 iterations. A tolerance of $\normtwo{r_m(\sigma_j)} / \normtwo{r_0(\sigma_j)} \leq 10^{-10}$ was used for all systems $j=1,\dots,n_f$. The preconditioner matrices were inverted using a direct solver. }
\label{fig:reserrordirect}
\end{figure}

\begin{figure}
\centering
\includegraphics[scale = 0.5]{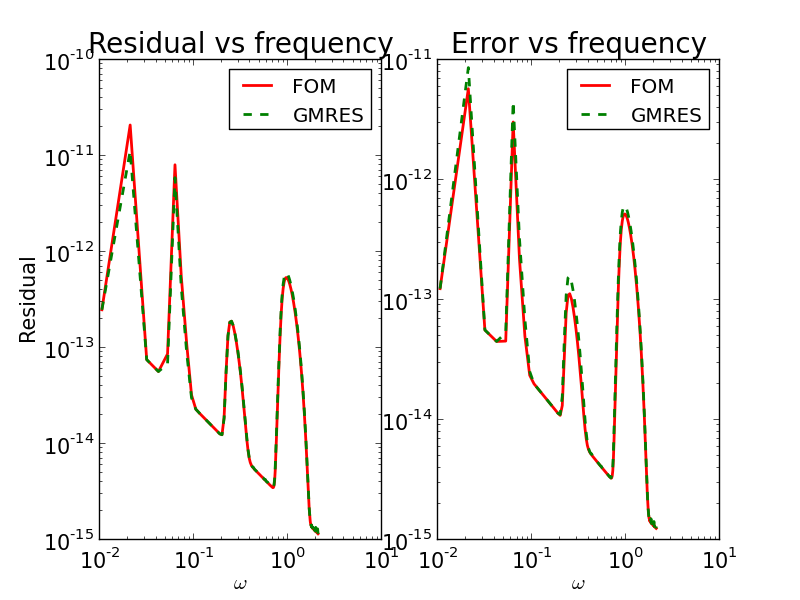}
\caption{(left) residual vs frequencies for FOM and GMRES and (right) error vs frequency for FOM and GMRES. All systems converged within 40 iterations. A tolerance of $\normtwo{r_m(\sigma_j)} / \normtwo{r_0(\sigma_j)} \leq 10^{-10}$ was used as stopping criteria for all systems $j=1,\dots,n_f$. The preconditioner matrices were inverted using a iterative solver with $\varepsilon = 10^{-12}$ as an inner stopping criterion. }
\label{fig:reserrorflexible}
\end{figure}

In figures~\ref{fig:reserrordirect} and~\ref{fig:reserrorflexible}, we plot the residuals and error as a function of the frequency of the system. A direct solver was used for figure~\ref{fig:reserrordirect}, whereas an iterative solver (algebraic multigrid preconditioned BiCGSTAB) was used in figure~\ref{fig:reserrorflexible} with a stopping criterion $\varepsilon = 10^{-12}$ (refer to section~\ref{sec:inexact}. A relative tolerance of $\normtwo{r_m(\sigma_j)} / \normtwo{r_0(\sigma_j)} \leq 10^{-10}$ was used for all systems $j=1,\dots,n_f$ and all systems converged within 40 iterations. The behavior of residual and the error is quite similar but this is to be expected.

As discussed in section~\ref{sec:inexact}, when an inexact preconditioner is used, the flexible Arnoldi relation is no longer exact and the true residual $r_m(\sigma_j)$ and the inexact residual $\tilde{r}_m(\sigma_j)$ are no longer exactly equal. In fact, the error between them can be bounded by the relation~\eqref{eqn:resdiff}. In figure \ref{fig:reserrorflexible}, we compare the difference between the true residual and the inexact residuals with the predicted bound $\varepsilon\norm{y_m(\sigma_j)}{1}$. We see in figure~\ref{fig:resdiff} that the bound is fairly accurate. The stopping tolerances were chosen to be  $\varepsilon = 10^{-9},10^{-11}, 10^{-12}$. In fact, for larger stopping tolerances for the inner solver, the outer solver did not converge.

\begin{figure}
\centering
\includegraphics[scale = 0.5]{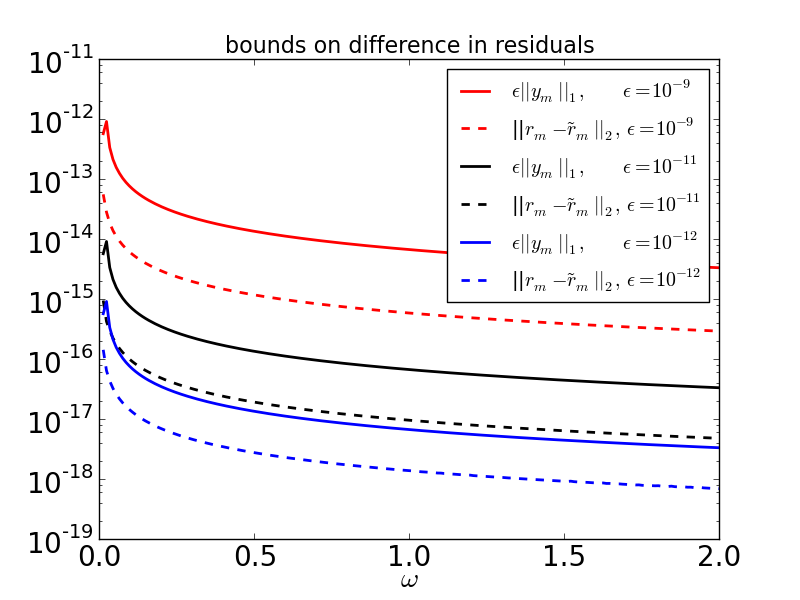}
\caption{Difference in the norm between the true and the inexact residuals, when an iterative solver is used as the preconditioner. Three different stopping tolerances were considered $\varepsilon = 10^{-9},10^{-11}, 10^{-12}$. }
\label{fig:resdiff}
\end{figure}

\subsection{Application: Tomographic reconstruction}
The objective is now to determine an known conductivity field $K(\bx)$ from discrete measurements of the head $\phi$ obtained from several pumping tests performed with multiple frequencies. Since the conductivity field needs to be positive, so that the forward problem is well-posed, we consider a log-transformation $s = \log K$. The ``true" field is taken to be that in figure~\ref{fig:trueloc} which is a scaled version of Franke's function~\cite{franke1979critical}. We choose the covariance matrix $Q$ to have entries $Q_{ij} = \kappa(\bx_i,\bx_j)$, corresponding to an exponential covariance kernel 
\[\kappa(\bx,\bF{y}) = \exp\left(-\frac{4\normtwo{\bx-\bF{y}}}{L}\right)\]
where, $L$ is the length of the domain. We also choose $R = \eta^2 I$ and $X = [1,\dots,1]^T$. We did not try to optimize the choice of covariance kernels to get the best possible reconstruction. Our goal is to study the associated computational costs. The size of our problem is chosen to be $10201$ discretization points. We assume no noise in our measurements and choose $\eta =  10^{-6}$.

The measurements are obtained by taking as the true log conductivity field, the field in figure~\ref{fig:trueloc}.  Then, the phasor is calculated by solving equations~\eqref{eqn:phasor1} with the source location and measurement locations given in figure ~\ref{fig:trueloc}. The measurements are collected for each frequency and two pieces of information are recored, the coefficients of the sine and cosine terms in equation~\eqref{eqn:measurementequation}. The inverse problem is then solved using these measurements. The frequency range that is chosen is $\omega \in [2\pi/150,2\pi/30]$. We pick $n_p = 5$ evenly spaced in log-scale in this particular frequency range and set $m=40$. All systems converged in $40$ iterations.
\begin{figure}
\centering
\includegraphics[scale = 0.29]{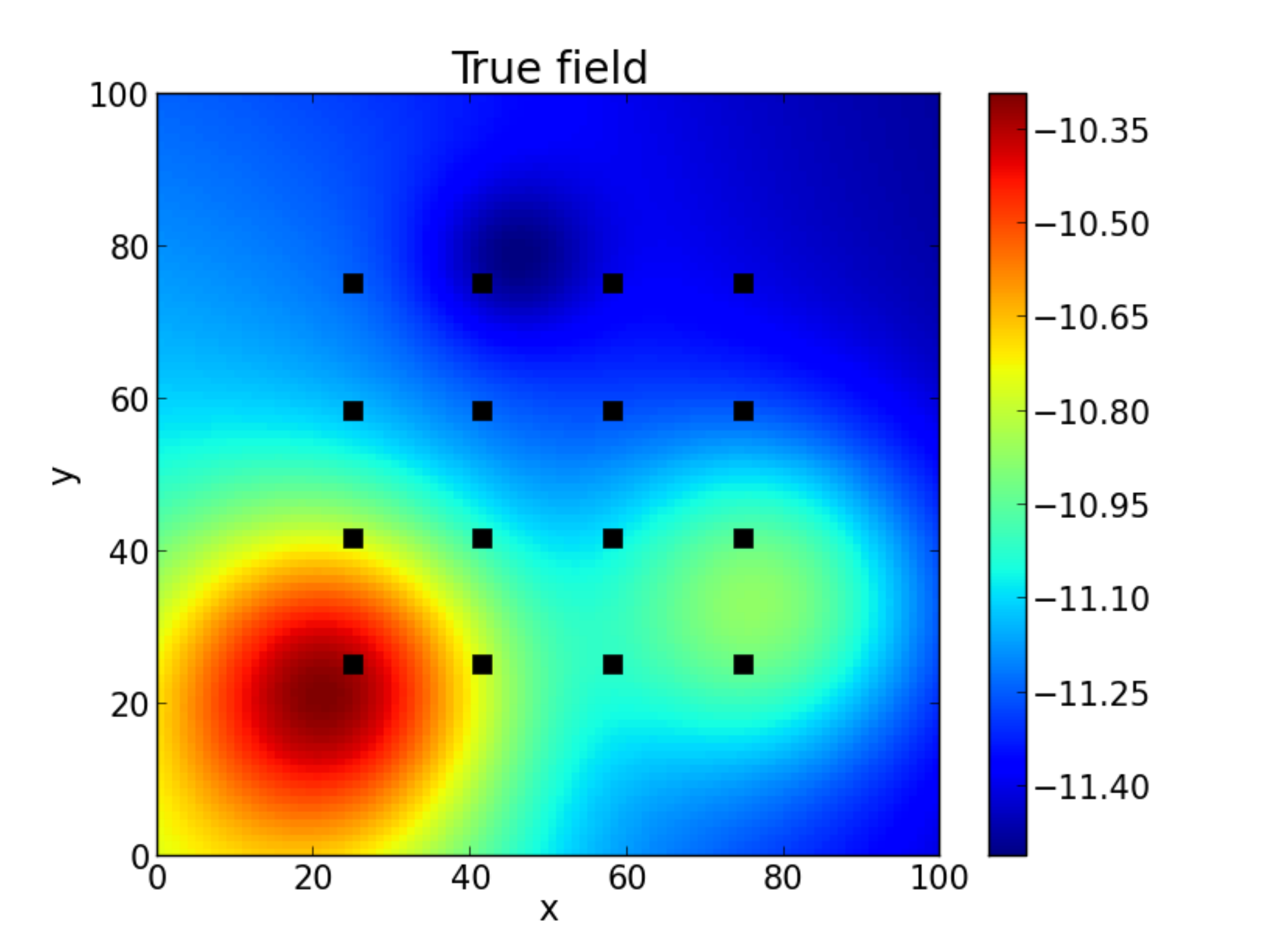}
\includegraphics[scale = 0.29]{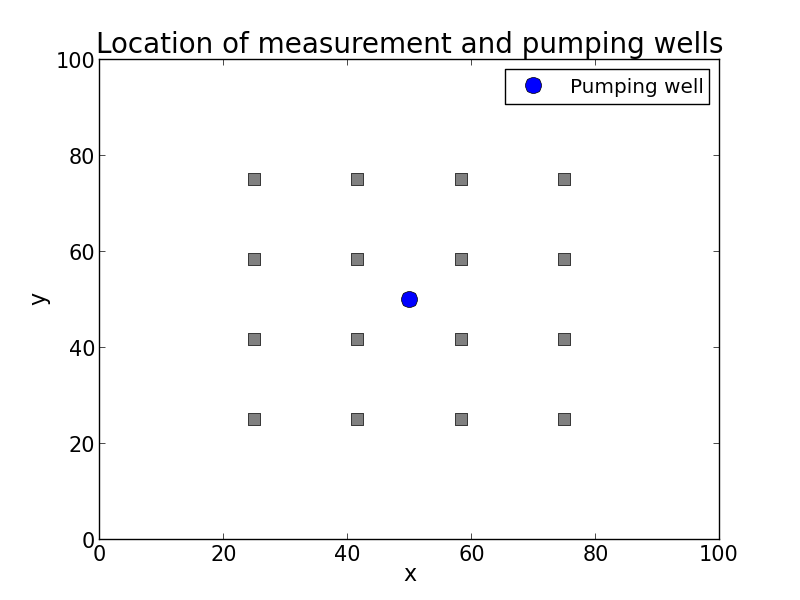}
\caption{(left) the true log conductivity field that we use for the synthetic inverse problem, and (right) location of measurement wells and the source. The size of the problem is $10201$. }
\label{fig:trueloc}
\end{figure}

The time for computing the Jacobian is listed in figure~\ref{fig:jacobian}. For the iterative solver, we use the flexible FOM solver using direct solver as preconditioner. The relative stopping tolerance we used was $10^{-10}$. Since the cost to solve systems with multiple frequencies is nearly the same as the cost to solve a single system, the time for building the Jacobian is, more or less, independent of the number of frequencies. However, when a direct solver is used to independently solve the systems for multiple frequencies, the cost for constructing the Jacobian scales linearly with the number of frequencies. This results in significant reduction in the cost for solving the inverse problem, since constructing the Jacobian is the most expensive part of solving the inverse problem. The disparity in the computation times for the Jacobian between the direct approach and the iterative procedure is exacerbated further, with larger problem sizes resulting from finer discretizations.

\begin{figure}
\centering
\includegraphics[scale = 0.5]{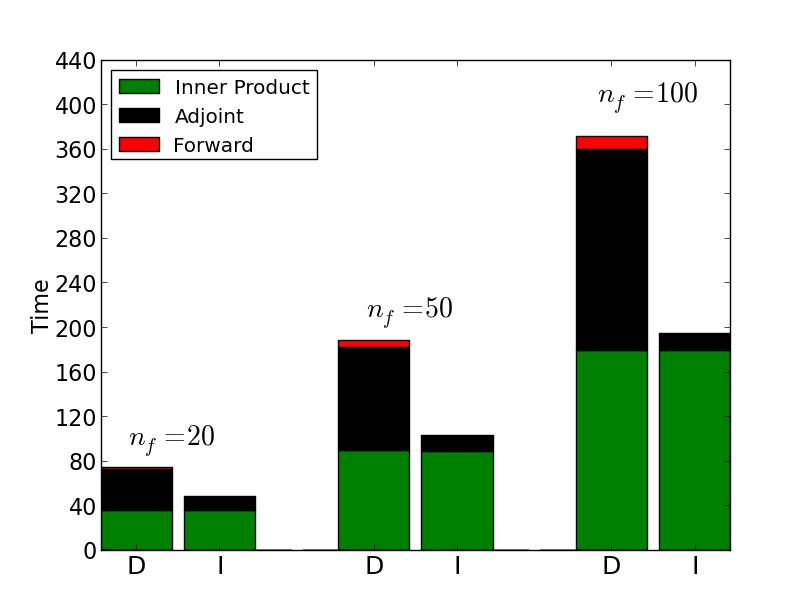}
\caption{Comparison of time taken for different components in the Jacobian. ``Forward" refers to solving the forward problem for multiple frequencies, equation~\eqref{eqn:phasor1}. ``Adjoint" refers to solving the adjoint field for multiple frequency at each measurement location, equation~\eqref{eqn:adjoint}. ``Inner Product'' refer to forming the inner product to form the rows of the Jacobian, equation~\eqref{eqn:sensitivity}. ``D'' and ``I'' refer to the direct and iterative methods respectively. The time for the forward problem using the solver is negligible, and while not visible in the plots, it is included in the construction of the bar plots.}
\label{fig:jacobian}
\end{figure}

Finally, we compare the error in the the reconstruction with multiple frequencies. Table~\ref{table:frequencies} lists the $L^2$ error in the reconstruction. We report two errors - the first being the $L^2$ error in the entire domain, the second being the $L^2$ error in the area enclosed by the measurement wells. While increasing the number of frequencies improves the error in the entire domain, as well as in the region enclosed by the measurement wells. This is the primary motivation for using multiple frequencies in the inversion. However, beyond a point, the addition of frequencies does not seem to reduce the error. This might be because there is no additional information that is obtained from the addition of measurements with these frequencies, and to further improve estimation accuracy, one would need to introduce more stimulation and observation points~\cite{cardiff2012multi}.

\begin{figure}
\centering
\includegraphics[scale = 0.29]{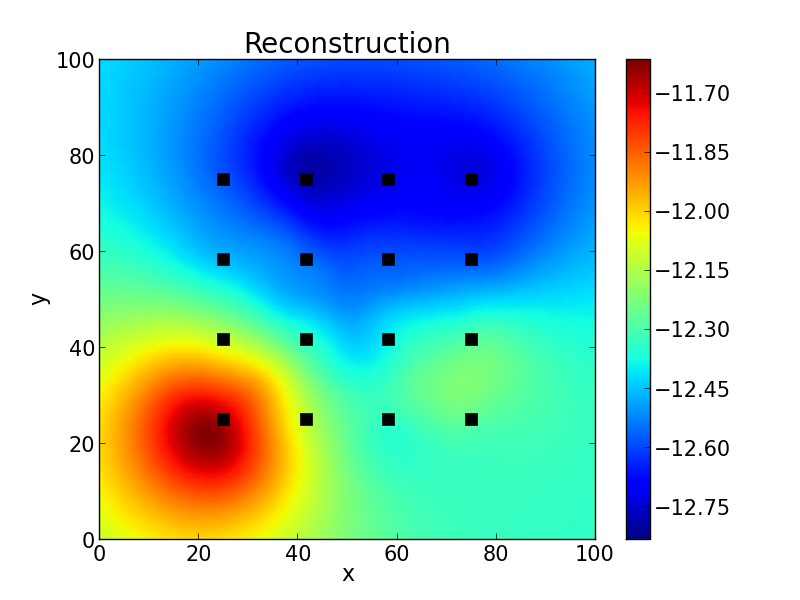}
\includegraphics[scale = 0.29]{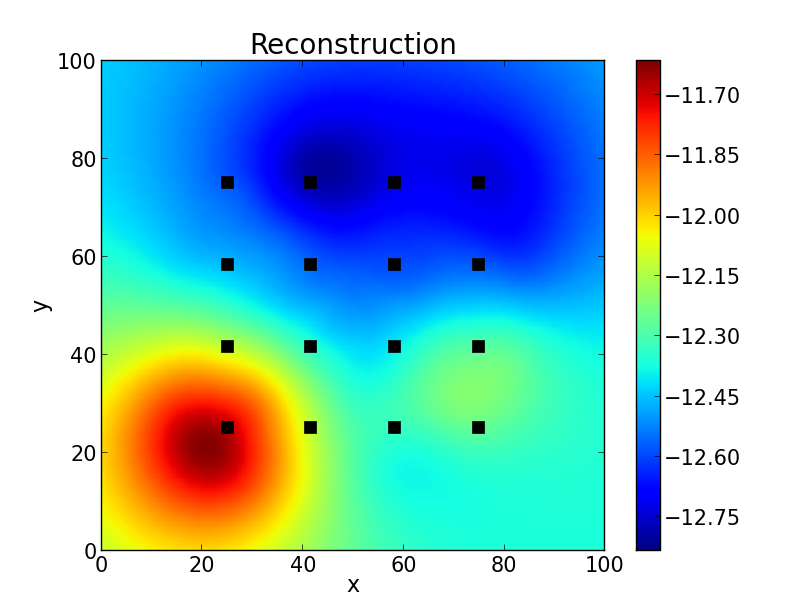}
\caption{Comparison of inversion results for log conductivity with $n_f = 1$ and $n_f = 20$ frequencies. The errors in reconstruction are reported in table~\ref{table:frequencies}.}
\label{fig:estimates}
\end{figure}

\begin{table}[h] 
 \centering
\begin{tabular}{|c|c|c|}
\hline
 $N_f$ & Total error & Error within box \\ \hline
$1$ & $0.3794$ & $0.0511$ \\ \hline
$5$ & $0.3379$ & $0.0352$ \\ \hline
$10$ & $0.3264$ & $0.0337$ \\ \hline
$20$ & $0.3180$ & $0.0328$ \\ \hline
\end{tabular}
\caption{L$^2$ error due to the reconstruction.  We report two errors - the first being the $L^2$ error in the entire domain, the second being the $L^2$ error in the area enclosed by the measurement wells. }
\label{table:frequencies}
\end{table}

\section{Conclusions}
We have presented a flexible Krylov subspace algorithm for shifted systems of the form~\eqref{eqn:genshifted} that uses multiple shifted preconditioners of the form $K+\tau M$. The values of $\tau$ are chosen in order to improve convergence of the solver for all the shifted systems. The number of preconditioners chosen varies based on the distribution of the shifts. A good rule of thumb is that the systems having shift $\sigma$ will converge faster if there a preconditioner with shift $\tau$ that is nearby $\sigma$. When the size of the linear systems is much larger, direct solvers are much more expensive. In such cases, preconditioning would be done using iterative solvers. The error analysis in section~\ref{sec:inexact} provides insight into monitor approximate residuals without constructing the true residuals. One can naturally extend the ideas in this paper to systems with multiple shifts and multiple right hand sides using either block or deflation techniques. 

We applied the flexible Krylov solver to an application problem that benefited significantly from fast solvers for shifted systems. In particular, oscillatory hydraulic tomography is a technique for aquifer characterization. However, since drilling observation wells to obtain measurements is expensive, one of the advantages of oscillatory hydraulic tomography is obtaining more informative measurements by pumping at different frequencies using the same pumping locations and measurement wells. In future studies we aim to study more realistic conditions for tomography, including a joint inversion for storage and conductivity. This would be ultimately beneficial to the practitioners. We envision that fast solvers for shifted systems would be beneficial for rapid aquifer characterization using oscillatory hydraulic tomography. 

\section{Acknowledgments}

The research in this work was funded by NSF Award 0934596, ``CMG Collaborative Research: Subsurface Imaging and Uncertainty Quantification''  and by NSF Award 1215742, `` Collaborative Research: Fundamental Research on Oscillatory Flow in Hydrogeology.'' The authors would also like to thank their collaborators Michael Cardiff and Warren Barrash for useful discussions and the two anonymous reviewers for their insightful comments.

\end{document}